\title{On the algebraic $K$-theory of truncated polynomial algebras in several variables}
\author[V. Angeltveit]{Vigleik Angeltveit}
\address{Australian National University, Canberra, Australia}
\email{vigleik.angeltveit@anu.edu.au }
\thanks{The first author was supported by an NSF All-Institutes Postdoctoral Fellowship administered by the Mathematical Sciences Research Institute through its core grant DMS-0441170, by NSF grant DMS-0805917, and by an Australian Research Council Discovery grant.}
\author[T. Gerhardt]{Teena Gerhardt}
\address{Michigan State University, East Lansing, MI 48824}
\email{teena@math.msu.edu}
\thanks{The second author was supported by NSF DMS--1007083 and NSF DMS--1149408}
\author[M.A.Hill]{Michael A.~Hill}
\address{University of Virginia \\ Charlottesville, VA 22904}
\email{mikehill@virginia.edu}
\thanks{The third author was supported by NSF DMS--0906285, DARPA FA9550-07-1-0555, and the Sloan Foundation}
\author[A.Lindenstrauss]{Ayelet Lindenstrauss}
\address{Indiana University \\ Bloomington, IN 47405}
\email{alindens@indiana.edu}
\newtheorem{theorem}{Theorem}[section]
\newtheorem{thm}[theorem]{Theorem}
\newtheorem{lemma}[theorem]{Lemma}
\newtheorem{cor}[theorem]{Corollary}
\newtheorem{defn}[theorem]{Definition}
\newtheorem{proposition}[theorem]{Proposition}
\newtheorem{prop}[theorem]{Proposition}
\newtheorem{remark}[theorem]{Remark}
\newtheorem{example}[theorem]{Example}
\let\c@equation\c@theorem
\numberwithin{equation}{section}
       \newcommand{\cX}{\mathcal{X}}  
  \newcommand{\bC}{\mathbb{C}}   \newcommand{\bF}{\mathbb{F}}        \newcommand{\bN}{\mathbb{N}}  
\newcommand{\bQ}{\mathbb{Q}}   \newcommand{\bT}{\mathbb{T}}   \newcommand{\bW}{\mathbb{W}}   \newcommand{\bZ}{\mathbb{Z}}
\newcommand{\sma}{\wedge} 
\newcommand{\holim}{\textnormal{holim}}
\newcommand{\hofib}{\textnormal{hofib}}
\newcommand{\hocofib}{\textnormal{hocofib}}
\newcommand{\TR}{\textnormal{TR}}
\newcommand{\TC}{\textnormal{TC}}
\newcommand{\TF}{\textnormal{TF}}
\newcommand{\THH}{\textnormal{THH}}
\newcommand{\vect}{\overrightarrow}
\begin{document}

\begin{abstract}
We consider the algebraic $K$-theory of a truncated polynomial algebra in several commuting variables, $K(k[x_1,\ldots,x_n]/(x_1^{a_1},\ldots,x_n^{a_n}))$. This naturally leads to a new generalization of the big Witt vectors. If $k$ is a perfect field of positive characteristic we describe the $K$-theory computation in terms of a cube of these Witt vectors on $\bN^n$. If the characteristic of $k$ does not divide any of the $a_i$ we compute the $K$-groups explicitly. We also compute the $K$-groups modulo torsion for $k=\bZ$.

To understand this $K$-theory spectrum we use the cyclotomic trace map to topological cyclic homology, and write $\TC(k[x_1,\ldots,x_n]/(x_1^{a_1},\ldots,x_n^{a_n}))$ as the iterated homotopy cofiber of an $n$-cube of spectra, each of which is easier to understand.
\newline

\noindent
\emph{Key Words:} Algebraic $K$-theory, trace map, truncated polynomial algebra.
\newline

\noindent
\emph{Mathematics Subject Classification (2010):} 19D55, 55P42.

\end{abstract}

\maketitle

\section{Introduction}
About 15 years ago, Hesselholt and Madsen \cite{HeMa97b} computed the relative algebraic $K$-theory groups of $k[x]/(x^a)$ for a perfect field $k$ of positive characteristic. In this paper we extend their result to a truncated polynomial ring in multiple commuting variables. We study
\[
K(k[x_1,\ldots,x_n]/(x_1^{a_1},\ldots,x_n^{a_n}), (x_1),\ldots,(x_n)),
\]
the appropriate multi-relative version of  $K(k[x_1,\ldots,x_n]/(x_1^{a_1},\ldots,x_n^{a_n}))$. For ease of exposition, let $A=k[x_1,\ldots,x_n]/(x_1^{a_1},\ldots,x_n^{a_n})$, and let $\widetilde{K}(A)$ denote the multi-relative version of $K(A)$.

Hesselholt and Madsen expressed $K_*(k[x]/(x^a), (x))$ in terms of big Witt vectors. Recall that given a truncation set $S \subset \bN$, one can define the Witt ring $\bW_S(k)$. With $S=\{1,2,\ldots,m\}$ this gives the length $m$ big Witt vectors, and they proved that for a perfect field $k$ of positive characteristic 
\[
K_{2q-1}(k[x]/(x^a), (x)) \cong \bW_{aq}(k)/V_a(\bW_q(k))
\]
and
\[
K_{2q}(k[x]/(x^a), (x)) = 0.
\]
Here $V_a$ is the $a$\textsuperscript{th} Verschiebung, one of the structure maps between Witt vectors.

When $k=\bF_p$, it is not much harder to write down the answer explicitly without referring to Witt vectors. However, this forces one to consider the cases $p \mid a$ and $p \nmid a$ separately, and the answer looks somewhat less elegant.

To express the answer in the $n$-variable case, we again take advantage of Witt vectors to organize our calculation. Our computations lead us naturally to an $n$-dimensional version of the big Witt vector construction. We will define these Witt vectors on $\bN^n$ carefully in the next section. As far as we have been able to determine this construction is new; it is not equivalent to the definition given by Dress and Siebeneicher \cite{DrSi88}. 

We say a set $S \subset \bN^n$ is a \emph{truncation set in $\bN^n$} if $(ds_1,\ldots,ds_n) \in S$ implies $(s_1,\ldots,s_n) \in S$, for $d \in \bN$. Given such an $S$, we define the Witt vectors on $\bN^n$, $\bW_S(k)$, in a way that generalizes the construction for $n=1$. We then prove the following theorems computing the homotopy groups of the multi-relative $K$-theory.

Let $S_q(I) \subset \bN^n$ be the truncation set in $\bN^n$ given by
\begin{equation} \label{eq:SqI} 
S_q(I) = \left\{(s_1,\ldots,s_n)\in\mathbb N^{n} \quad \Big| \quad  \sum_{i=1}^n \left\lfloor \frac{s_i-1}{a_i^{\chi_I(i)}} \right\rfloor \leq q-1\right \}.
\end{equation}
Here $\chi_{I}$ is the characteristic function of $I$, evaluating to $1$ if the argument is in $I$ and $0$ otherwise.

\begin{thm}\label{t:main2}
Suppose $k$ is a perfect field of positive characteristic and let
\[
\widehat{E}_{1}^{s,t}=\begin{cases} 
\bigoplus_{|I|=s} \bW_{S_{q}(I)}(k) & t=2q-1 \\
0 & t=2q,
\end{cases}
\]
Then there is a spectral sequence with
\[
E_1^{s,*} \cong \widehat{E}_1^{s,*} \otimes E(x_1,\ldots,x_{n-1}),
\]
where $E(x_1,\ldots,x_{n-1})$ is an exterior algebra over $\bZ$ on $1$-dimensional classes. This spectral sequence collapses at $E_{n}$ and converges to $\widetilde{K}_{t-s}(A)$.

Moreover, if $|k|$ is finite, then
\[
|\widehat{E}_{1}^{s,2q-1}|=\sum_{|I|=s} |k|^{\binom{n+q-1}{n} \prod_{i \in I} a_i}.
\]
\end{thm}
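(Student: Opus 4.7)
The plan is to exploit the cyclotomic trace map $K \to \TC$. By the Dundas--Goodwillie--McCarthy theorem, applied iteratively to the nilpotent ideals $(x_i) \subset A$, the trace induces an equivalence of multi-relative spectra $\widetilde{K}(A) \simeq \widetilde{\TC}(A)$ after $p$-completion; since $k$ is perfect of characteristic $p$, the groups in question are already $p$-complete, so the equivalence holds integrally. It therefore suffices to compute $\widetilde{\TC}(A)$. As announced in the abstract, I would express $\widetilde{\TC}(A)$ as the iterated total homotopy cofiber of an $n$-cube of $\TC$-spectra, each vertex of which is a $\TC$-spectrum built from pieces of $A$ in which selected subsets of the variables $x_i$ are truncated and the rest are left free. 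The standard filtration on an iterated cofiber produces a strongly convergent spectral sequence whose filtration-$s$ column is $\bigoplus_{|I|=s} \pi_*$ of the vertex indexed by $I$, which is precisely the shape of $E_1^{s,*}$ claimed in the theorem.

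The core step is identifying each vertex spectrum. This rests on the pointed-monoid model of $\THH$: for each $I$ the relevant algebra has an underlying multiplicative pointed monoid $\Pi_I$ whose cyclic bar construction carries a $\bT^{|I|}$-action, and a Hesselholt--Madsen-style fixed point and cyclotomic structure analysis (generalizing the $n=1$ calculation) identifies the contribution to $\widetilde{\TC}$ from vertex $I$ as concentrated in odd degrees $2q-1$ with value $\bW_{S_q(I)}(k)$, and vanishing in even degrees. This realizes $\widehat E_1$. The tensor factor $E(x_1,\ldots,x_{n-1})$ arises from the $\bT^n$-equivariant structure on the iterated cyclic bar construction: after one of the coordinate circles is absorbed into the usual cyclotomic-trace and Witt-vector bookkeeping, the remaining $n-1$ circles contribute one-dimensional exterior generators (analogous to the way $H_*(\bT^{n-1};\bZ)$ decomposes as $E(x_1,\ldots,x_{n-1})$).

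The main obstacle is proving collapse at $E_n$. Because $\widehat E_1$ lives in odd total degree, the exterior generators $x_i$ are in degree $1$, and $d_r$ shifts $(s,t)$ by $(r,r-1)$, a careful parity bookkeeping combined with the fact that the cube has filtration length exactly $n$ (so $s$ can only range over $0,\ldots,n$) forces all differentials from $E_n$ onward to vanish. Finally, the cardinality formula reduces to the identity $|\bW_S(k)| = |k|^{|S|}$ for a finite field $k$. Writing $s_i - 1 = a_i u_i + r_i$ with $0 \le r_i < a_i$ for each $i \in I$ decouples the inequality defining $S_q(I)$ into a choice of non-negative integers $(y_1,\ldots,y_n)$ with $\sum y_i \le q-1$ (where $y_i = s_i - 1$ for $i \notin I$ and $y_i = u_i$ for $i \in I$), together with independent choices $r_i \in \{0,\ldots,a_i - 1\}$ for $i \in I$. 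The number of such non-negative integer tuples is $\binom{n+q-1}{n}$, and multiplying by $\prod_{i \in I} a_i$ yields $|S_q(I)|$ and hence the stated cardinality.
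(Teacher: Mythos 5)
Your overall strategy is the paper's: reduce to $\widetilde{\TC}(A)$ by the Dundas--Goodwillie--McCarthy theorem, exhibit $\widetilde{\TC}(A)$ as the iterated cofiber of an $n$-cube of cyclotomic spectra built from the Hesselholt--Madsen one-variable cofiber sequences, and read off the resulting Mayer--Vietoris spectral sequence (Corollary~\ref{cor:SS}); your count $|S_q(I)|=\binom{n+q-1}{n}\prod_{i\in I}a_i$ and the reduction $|\bW_S(k)|=|k|^{|S|}$ are also exactly right. But the proposal has real gaps. First, the vertices of the cube are misidentified: except at $I=\varnothing$ they are not $\TC$ of rings ``with some variables truncated and the rest free'', but the auxiliary spectra $\THH(k)\sma X_I$ with $X_I\simeq\bigvee_{\vect{s}} S^1/C_{s_1+}\sma\cdots\sma S^1/C_{s_n+}\sma S^{\lambda_I(\vect{s})}$ obtained by smashing together the two sides of the Hesselholt--Madsen cofibration (one also has to check the maps are cyclotomic and invoke Proposition~\ref{homotopyfiber} to commute $\TC$ with the iterated (co)fibers). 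Second, and more seriously, the identification of $\TC_*(\THH(k)\sma X_I)$ with $\bW_{S_q(I)}(k)\otimes E(x_1,\ldots,x_{n-1})$ --- which is the actual content of the theorem and the reason the Witt vectors on $\bN^n$ are introduced --- is asserted as ``a Hesselholt--Madsen-style analysis'' rather than proved. What is needed is: the equivariant splitting $S^1/C_{s_1}\times\cdots\times S^1/C_{s_n}\cong S^1/C_{\gcd(\vect{s})}\times\bT^{n-1}$ chosen compatibly with the $a_i$\textsuperscript{th} power maps of the cube (Lemma~\ref{lem:Euclid}; this is where the exterior factor comes from, and without the compatibility one cannot later control the $d_1$-differentials as in Theorem~\ref{t:main3}); the computation of $\TF(\THH(k)\sma\widehat{X}_I)$ up to profinite completion via the Wirthm\"uller isomorphism and \cite[Lemma 8.2]{HeMa97} (Lemma~\ref{l:TF}); the identification of $\pi_{2q-2}\big[(\THH(k)\sma S^{\lambda_I(\vect{s})})^{C_{\gcd(\vect{s})}}\big]$ with $\bW_{S_q(I;\vect{s})}(k)$ using the $RO(S^1)$-graded computations of \cite{HeMa97,HeMa97b} (Theorem~\ref{t:TFcalc}); and the verification that the restriction maps $R_d$ are the Witt-vector restrictions, so that passing from $\TF$ to $\TC$ (the limit over $R$, equivalently the equalizer of $R$ and the identity) yields $\bW_{S_q(I)}(k)$. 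None of this appears in your sketch.

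Your collapse argument also does not work as stated. Once you tensor with $E(x_1,\ldots,x_{n-1})$ on degree-one classes, the $E_1$-term occupies both parities in $t$, so parity bookkeeping kills nothing; what the $n$-cube structure gives for free is only that $d_r=0$ for $r>n$, since the filtration has columns $s=0,\ldots,n$. So your reasoning yields degeneration only from $E_{n+1}$ on, not the vanishing of $d_n$ needed for the statement as given. (A smaller point: your claim that the relative groups are ``already $p$-complete'' should be justified --- they are bounded $p$-torsion for $k$ perfect of characteristic $p$ --- or one can simply cite the integral form of the theorem as the paper does via \cite{DuGoMc}.)
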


In Hesselholt and Madsen's computation, there was an analogous spectra sequence which automatically collapsed at $E_{2}$, the unique differential given by the Verschiebung 
\[
V_a : \bW_q(k) \to \bW_{aq}(k).
\] 
For a perfect field of positive characteristic, the Verschiebung is injective, and this allows a simple determination of the relative algebraic $K$-groups as a quotient of the Witt vectors by the image of the Verschiebung. In the $n$-dimensional case the Verschiebungs do not capture the whole story. Instead we can say the following.

\begin{thm} \label{t:main3}
Suppose $k$ is a perfect field of characteristic $p>0$ and suppose $p$ does not divide any of the $a_i$. Then the $d_{1}$-differentials in the spectral sequence from Theorem~\ref{t:main2} are split injective. 

In this case, we can write the relative algebraic $K$-groups of $A$ as
\[
\widetilde{K}_{\ast}(A)\cong \widehat{K}_{\ast}(A)\otimes E(x_{1},\dots,x_{n-1}),
\]
where as above $E(x_{1},\dots,x_{n-1})$ is the exterior algebra over $\mathbb Z$ on $1$-dimensional classes, and where $\widehat{K}$ is concentrated in odd degrees with
\[
\widehat{K}_{2q-1}(A) \cong \bW_{S_q(\{1,\ldots,n\})}(k)/\left(\sum_{i=1}^n V^i_{a_i}\big(\bW_{S_q(\{1,\ldots,\hat{i},\ldots,n\})}(k)\big)\right)
\]
(here $\hat{i}$ denotes skipping $i$).

If in addition $k$ is finite then
\[
|\widehat{K}_{2q-1}(A)| = |k|^{\binom{n+q-1}{n} \prod_{i=1}^n (a_i-1)}.
\]
\end{thm}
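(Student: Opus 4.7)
The plan is to identify the $d_1$-differentials of the spectral sequence of Theorem~\ref{t:main2} with a Koszul-type complex built from Verschiebung operators, show that this complex is acyclic except in the top degree whenever the $a_i$ are invertible in $k$, and then read off $E_\infty$ together with the cardinality statement.

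First, on the Witt-vector factor $\widehat E_1^{\bullet,2q-1}$ of the $E_1$-page, I would show that the $d_1$-differential restricted to the summand indexed by $I$ with $|I|=s$ is
\[
d_1|_I=\sum_{i\notin I}\epsilon_{I,i}V_{a_i}^i\colon\bW_{S_q(I)}(k)\longrightarrow\bigoplus_{i\notin I}\bW_{S_q(I\cup\{i\})}(k),
\]
where $V_{a_i}^i$ is the Verschiebung in the $i$-th coordinate (well-defined because $S_q(I)\subseteq S_q(I\cup\{i\})$) and $\epsilon_{I,i}=\pm 1$ are Koszul signs. This identification comes from unwinding the connecting maps in the $n$-cube of spectra whose iterated cofiber computes $\TC(A)$, in direct analogy with the lone $V_a$ appearing in Hesselholt--Madsen's $n=1$ calculation.

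Next, from the ghost-coordinate description of $\bW_S$ on $\bN^n$ developed in Section~2, I would verify that Verschiebungs in distinct coordinates commute, that Frobenii in distinct coordinates commute with Verschiebungs in orthogonal coordinates, and that $F_{a_i}^i V_{a_i}^i=a_i\cdot\mathrm{id}$. When $p\nmid a_i$, each $a_i$ is a unit on the $p$-local Witt groups, so $s_i:=a_i^{-1}F_{a_i}^i$ is a section of $V_{a_i}^i$. Using these sections together with the commutation relations, a Koszul-style induction shows that $(\widehat E_1^{\bullet,2q-1},d_1)$ is acyclic in degrees $<n$: given a cycle $(u_J)_{|J|=s}$ with $s<n$, applying appropriate $s_i$ to selected components exhibits $(u_J)$ as a boundary. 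This yields split injectivity of each $d_1$ (its image is a direct summand of the target), and the conclusion persists after tensoring with the free $\bZ$-module $E(x_1,\dots,x_{n-1})$.

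Consequently $E_2^{s,\ast}=0$ for $s<n$ while $E_2^{n,\ast}=\widehat E_1^{n,\ast}\big/\sum_i V_{a_i}^i\widehat E_1^{n-1,\ast}$, tensored with $E(x_1,\dots,x_{n-1})$. With only one column surviving there is no room for higher differentials or extension problems, so the spectral sequence degenerates to $\widetilde K_\ast(A)\cong\widehat K_\ast(A)\otimes E(x_1,\dots,x_{n-1})$ with the claimed formula for $\widehat K_{2q-1}(A)$. For the cardinality, since $(\widehat E_1^{\bullet,2q-1},d_1)$ is a finite complex with homology concentrated in degree $n$, an Euler-characteristic calculation using Theorem~\ref{t:main2} gives
\[
\log_{|k|}|\widehat K_{2q-1}(A)|=\sum_{s=0}^{n}(-1)^{n-s}\log_{|k|}|\widehat E_1^{s,2q-1}|=\binom{n+q-1}{n}\sum_{I\subseteq\{1,\dots,n\}}(-1)^{n-|I|}\prod_{i\in I}a_i=\binom{n+q-1}{n}\prod_{i=1}^{n}(a_i-1).
\]
The main obstacle I anticipate is making the Koszul induction work rigorously: one must carefully exploit the multi-variable Witt structure to reduce one variable at a time, since $V_{a_i}^iF_{a_i}^i$ is generally not $a_i\cdot\mathrm{id}$ (only $F_{a_i}^iV_{a_i}^i$ is), so no single contracting homotopy suffices and the signs and normalizations in the induction must be tracked with care.
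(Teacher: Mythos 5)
Your overall strategy (identify the $d_1$-differential with signed sums of Verschiebungs, prove the $E_1$-complex is exact below the top column, and get the order by an Euler-characteristic count) is sound in outline, and the Euler-characteristic computation of $|\widehat{K}_{2q-1}(A)|$ is a clean way to finish. But the two algebraic identities your Koszul induction rests on are false for the Witt vectors on $\bN^n$ as defined in \S\ref{s:ndimWitt}. First, $F^i_{a_i}V^i_{a_i}$ is \emph{not} $a_i\cdot\mathrm{id}$: by the last Proposition of \S\ref{s:ndimWitt}, on the one-dimensional factor of $\bW_S(k)$ indexed by a primitive vector $\vect{s}$ the operators $V^i_{a_i}$ and $F^i_{a_i}$ act as the classical $V_{e_i}$ and $F_{e_i}$ with $e_i=a_i/\gcd(s_i,a_i)$, so the composite is multiplication by $e_i$, a quantity that varies from factor to factor (e.g.\ for $n=2$, $a_2=2$ it is $2$ on the factor through $(2,1)$ but $1$ on the factor through $(1,2)$). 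Hence $a_i^{-1}F^i_{a_i}$ is not a section of $V^i_{a_i}$; a section exists only after correcting by the factor-dependent units $e_i^{-1}$. Second, the claimed commutation $F^i_{a_i}V^j_{a_j}=V^j_{a_j}F^i_{a_i}$ for $i\neq j$ fails when $\gcd(a_i,a_j)>1$: a ghost-component (or Teichm\"uller-element) computation with $a_i=a_j=2$ shows the two composites differ by factor-dependent units coming from the weights $\gcd(\vect{u})$ in the ghost map, and this discrepancy survives over a perfect field of odd characteristic. Since your contracting-homotopy induction uses these identities literally, it does not go through as written; "up to units" is not good enough when the units vary with the summand, unless you first decompose the whole complex into its one-dimensional factors and track them. (Two smaller points: exactness in columns $<n$ gives $E_2$ concentrated in column $n$, but it does not literally give injectivity of $d_1$ on $E_1^{s,\ast}$ for $s\geq 1$, whose kernel contains the image of the previous differential; and the differential on the full $E_1\cong\widehat{E}_1\otimes E(x_1,\dots,x_{n-1})$ is not $\bigl(\sum\pm V^i_{a_i}\bigr)\otimes\mathrm{id}$ --- the exterior classes pick up extra unit factors $d_i$, again varying with the summand.)

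The needed factor-by-factor analysis is exactly where the paper puts the work, and it does so at the spectrum level rather than on the $E_1$-page: by Lemma~\ref{lem:Euclid} and Lemma~\ref{l:aasjsdfkl}, the $a_i$\textsuperscript{th} power map on the wedge summand indexed by $\vect{s}$ factors as a transfer ($V_{e_i}$), an equivalence, and a degree-$d_i$ map on a torus coordinate, with $a_i=d_ie_i$ depending on $\vect{s}$. When $p\nmid a_i$ both pieces are $p$-adic equivalences onto the corresponding summands, so the $n$-cube splits into subcubes of wedge summands in which every map is an equivalence; the iterated cofiber therefore retains only the summands with $a_i\nmid s_i$ for all $i$, which gives both the quotient description by $\sum_i V^i_{a_i}$ and the order. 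If you want to salvage your purely algebraic route, you must first split the $E_1$-complex over the primitive vectors and rerun the induction with the correct effective indices $e_i(\vect{s})$ and unit corrections --- at which point you are essentially reproducing the paper's summand-wise argument in homotopy groups.
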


This completely determines the algebraic $K$-theory groups in this case. We can be even more explicit. Given a truncation set $S$ in $\bN^n$ and $(s_1,\ldots,s_n) \in S$ we can extract a $1$-dimensional ``$p$-typical'' truncation set $p^{\bN_0}(s_1,\ldots,s_n) \cap S$ consisting of those $p$-power multiples of $(s_1,\ldots,s_n)$ which lie in $S$. Then the group $\widehat{K}_{2q-1}(A)$ in Theorem \ref{t:main3} above is isomorphic to
\[
\bigoplus \bW_{p^{\bN_0}(s_1,\ldots,s_n) \cap S_q(\{1,\ldots,n\})}(k;p),
\]
where the sum is over all $s_1,\ldots,s_n \geq 1$ with $p \nmid \gcd(s_i)$ and $a_i \nmid s_i$.

We also compute the ranks of the multi-relative $K$-theory groups of truncated polynomials over $\mathbb{Z}$. We prove the following theorem:
\begin{theorem} \label{PoincareKZ} \label{mainZ}
The Poincar\'e series of the rationalization of
\[
K_*(\bZ[x_1,\ldots,x_n]/(x_1^{a_1} \ldots x_n^{a_n}), (x_1),\ldots,(x_n))
\]
is given by
\[
\sum_{j \geq 1} t^{2j-1} (1+t)^{n-1} \binom{n+j-2}{n-1} \prod_{i=1}^n (a_i-1).
\]
\end{theorem}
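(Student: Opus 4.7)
I take the ring as written, $B = \bZ[x_1,\ldots,x_n]/(x_1^{a_1}\cdots x_n^{a_n})$ with the single monomial relation, and plan to first collapse the defining $n$-cube and then compute the remaining relative K-theory rationally via cyclic homology.

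The first observation is that for any nonempty $I \subseteq \{1,\ldots,n\}$ the single relation lies in $(x_i)$ for each $i\in I$, so $B/\sum_{i\in I}(x_i) = \bZ[x_j : j \notin I]$, a polynomial ring. By Quillen's fundamental theorem each $K(\bZ[y_1,\ldots,y_k]) \simeq K(\bZ)$ integrally; every vertex of the punctured $n$-cube is therefore equivalent to $K(\bZ)$ with equivalence structure maps, and since the punctured cube has a terminal object its homotopy limit is simply $K(\bZ)$. The total iterated fiber of the full cube thus reduces to $\hofib\bigl(K(B) \to K(\bZ)\bigr)$, the ordinary relative K-theory of $B$ with respect to its augmentation ideal $I_0 = (x_1,\ldots,x_n)$.

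Although $I_0$ is not nilpotent in $B$, the graded ring $B_\bQ$ is the inverse limit of its Artinian truncations $B_\bQ/I_0^N$, each a nilpotent extension of $\bQ$ to which Goodwillie's theorem applies and gives $K_*(\,\cdot\,,I_0/I_0^N)_\bQ \cong HC_{*-1}(\,\cdot\,,I_0/I_0^N)_\bQ$. Rational continuity of $K$-theory and cyclic homology along the $I_0$-adic tower (a pro-Goodwillie style argument) yields the same isomorphism for $B$ itself, reducing the computation to the rational cyclic homology of the hypersurface $B_\bQ = \bQ[x_1,\ldots,x_n]/(f)$ with $f = x_1^{a_1}\cdots x_n^{a_n}$, relative to its augmentation. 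Using the resolution $0 \to P \xrightarrow{f} P \to B_\bQ \to 0$ with $P = \bQ[x_1,\ldots,x_n]$, together with the HKR identification $HH_*(P)_\bQ = \Omega^*_{P/\bQ}$ and the matrix-factorization $2$-periodicity of $HH_*(B_\bQ)$ in high degrees, one obtains $HH_*(B_\bQ,I_0)_\bQ$ as an explicit combination of differential forms on $P$ and Koszul data coming from the Jacobian $df = \sum_i a_i x_i^{a_i-1}\prod_{j\neq i} x_j^{a_j}\,dx_i$. The Connes $B$-operator and the SBI spectral sequence then produce $HC_*(B_\bQ,I_0)_\bQ$, and the Goodwillie shift recovers the rational K-theory.

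The resulting Poincaré series is $\prod_i(a_i-1)\cdot t(1+t)^{n-1}/(1-t^2)^n$, which using $(1-t^2)^n=(1-t)^n(1+t)^n$ and $\sum_{m\ge 0}\binom{n+m-1}{m}t^{2m}=(1-t^2)^{-n}$ rewrites as the claimed
\[
\sum_{j\ge 1} t^{2j-1}(1+t)^{n-1}\binom{n+j-2}{n-1}\prod_{i=1}^n(a_i-1).
\]
The main obstacle is the hypersurface Hochschild/cyclic calculation: the singular locus of $B_\bQ$ is supported on the coordinate hyperplanes, and the Jacobian ideal contributes corrections beyond the smooth HKR part. Verifying that these corrections assemble precisely into the exterior-algebra factor $(1+t)^{n-1}$ --- consistent with the $E(x_1,\ldots,x_{n-1})$ factor appearing in Theorems~\ref{t:main2} and~\ref{t:main3} --- is where the main technical content lies.
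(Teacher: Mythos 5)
Your proof does not establish the paper's theorem, and the problem starts with the ring itself. Although the displayed relation in the statement is missing commas, the object of the whole paper (see the abstract, the ring $A$ in the introduction, and the proof in \S\ref{s:integers}, which uses the pointed monoid $\Pi_{a_1}\sma\cdots\sma\Pi_{a_n}$) is the truncated polynomial ring $\bZ[x_1,\ldots,x_n]/(x_1^{a_1},\ldots,x_n^{a_n})$, not the hypersurface $\bZ[x_1,\ldots,x_n]/(x_1^{a_1}\cdots x_n^{a_n})$. For the intended ring your very first reduction fails: for nonempty $I$ the quotient $A/\sum_{i\in I}(x_i)$ is a truncated polynomial ring in the remaining variables, not a polynomial ring, so the punctured cube is nowhere near constant and the multirelative $K$-theory does not collapse to $\hofib\big(K(A)\to K(\bZ)\big)$. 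The surviving cube is exactly what produces the factor $(1+t)^{n-1}\prod_i(a_i-1)$ in the answer, so no argument that discards it can be repaired by bookkeeping at the end. (Even granting your reading of the ring, two further steps are gaps: a terminal object in the punctured cube computes the homotopy \emph{colimit}, not the limit --- you would instead need that all maps are equivalences together with contractibility of the nerve; and Goodwillie's theorem requires a nilpotent ideal, whereas your $I_0$ is not nilpotent for $n\geq 2$ and $B$ is not $I_0$-adically complete, so the ``pro-Goodwillie/continuity'' passage to relative cyclic homology is an unproven assertion. Finally, the hypersurface Hochschild/cyclic computation that would actually produce the series is explicitly deferred --- consistency with the $E(x_1,\ldots,x_{n-1})$ factor of Theorems~\ref{t:main2} and \ref{t:main3} is not a proof.)

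For contrast, the paper's proof never leaves the cube: it uses the integral cyclotomic trace (legitimate here because the ideals $(x_i)$ are nilpotent in $A$) to replace $\widetilde{K}(A)$ by $\widetilde{\TC}(A)$, identified in Proposition~\ref{ref:IdentificationofTC} as the iterated cofiber of the cube $\{\TC(T\sma X_I)\}$. Lemma~\ref{l:TF} expresses $\TF(T\sma\widehat{X}_I)$ as a wedge of $\Sigma[T\sma S^{\lambda_I(\vect{s})}]^{C_{\gcd(\vect{s})}}$, and the $RO(S^1)$-graded TR-computations for $\bZ$ from \cite{AnGeHe} give Proposition~\ref{PoincareTCZ}: each vertex contributes rationally one $\bZ$ in degree $2\dim_{\bC}\lambda_I(\vect{s})+1$ per $\vect{s}$, yielding the series $\sum_j t^{2j-1}\binom{n+j-2}{n-1}\prod_{i\in I}a_i$. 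The theorem then follows by assembling the cube using the Verschiebung computations of \cite{AnGeHe}; it is this assembly that converts $\prod_{i\in I}a_i$ over the $2^n$ vertices into $(1+t)^{n-1}\prod_i(a_i-1)$. If you want a cyclic-homology route, you would have to run a Goodwillie-type argument for the genuinely nilpotent multirelative situation of the truncated ring, not for a hypersurface.
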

This generalizes results of Soul\'{e} \cite{So81} and Staffeldt \cite{St85} in the single variable case. Note that the difference between $\widetilde{K}_*(A)$ for $k=\bZ$ and $k=\bQ$ is torsion, so Theorem \ref{mainZ} also gives the Poincar\'e series of 
\[
K_*(\bQ[x_1,\ldots,x_n]/(x_1^{a_1} \ldots x_n^{a_n}), (x_1),\ldots,(x_n)).
\]

\subsection{Organization}
In \S \ref{s:ndimWitt}, we give a precise definition of the Witt vectors on $\bN^n$. This section might be of independent interest. The later sections provide the proof of Theorems~\ref{t:main2} and \ref{t:main3}, computing these multi-relative $K$-groups via trace methods. In \S \ref{s:cyclotomic}, we define cyclotomic spectra and give some relevant examples. In particular, we describe a cube in cyclotomic spectra, the homotopy pullback of which receives a map from the multi-relative $K$-theory in question. In \S \ref{s:KTheory}, we review the cyclotomic trace map and define the multi-relative $K$-theory spectrum, before we prove the main theorems in \S \ref{s:perfectfields} and \S \ref{s:integers}.

\subsection{Acknowledgments}
The authors would like to thank Bj{\o}rn Dundas and Lars Hesseholt for several helpful conversations regarding this project, and the referee for careful reading of the manuscript and for a greatly tightened Lemma~\ref{lem:Euclid}. The authors would also like to thank the American Institute of Mathematics (AIM). Some of this work was done during a visit to AIM under the SQuaREs program.

\section{Witt vectors} \label{s:ndimWitt}
At the heart of our computation is building an $n$-cube of $S^1$-equivariant spectra, analogous to a cofiber sequence of $S^1$-spectra that Hesselholt and Madsen used in the one variable case. The various vertices of this hypercube are underlain by $n$-fold smash powers of $S^1_+$, and the maps in the diagram arise from the Hesselholt-Madsen maps restricted to one of the factors. This suggests that a useful language will again be Witt vectors, but our Witt vectors will be modeled on truncation sets in $\bN^n$, rather than those modeled on $\bN$.

\subsection{Classical Case}
For the classical construction, recall (e.g.\ from \cite{HeMa97}) that given a truncation set $S \subset \bN = \{1,2,\ldots\}$ and a commutative ring $R$ we can define a commutative ring $\bW_S(R)$. As a set, $\bW_S(R)=R^S$, and we define addition and multiplication in such a way that the ghost map
\[
w: \bW_S(R) \to R^S
\]
that takes a vector $(a_s)_{s \in S}$ to the vector $(w_s)_{s \in S}$ with
\[
w_s = \sum_{dt = s} t a_t^d
\]
is a ring map. One can show that there is exactly one functorial way to do this. 

We will be particularly interested in three families of truncation sets. If our set $S$ is $\{1,2,\ldots,m\}$, we denote $\bW_S(R)$ by $\bW_m(R)$ and call it the big Witt vectors of length $m$. If $S=\{1,p,\ldots,p^{m-1}\}$, we denote $\bW_S(R)$ by $\bW_m(R;p)$ and call it the $p$-typical Witt vectors of length $m$. If $R$ is a $\bZ_{(p)}$-algebra, there is a splitting
\[
\bW_m(R) \cong \prod_{p \nmid s} \bW_{\lfloor \log_p(m/s)+1 \rfloor}(R;p).
\]
Finally, for any $m$, there is a truncation set generated by $m$: 
\begin{equation} \label{eq:anglem} 
\langle m \rangle  = \{d \in \bN,  d \mid m\}
\end{equation}
 and the associated Witt vectors $\bW_{\langle m \rangle}(R)$.

Note that associating Witt vectors to a truncation set is a special case of a more general construction. Consider $\bN$ as a set with an action of the multiplicative monoid $\bN$. A truncation set is then just a subposet of $\bN$ which is closed under divisibility. The $\bN$-action endows this with further structure, though: for any two elements $r$ and $s$ such that $r\leq s$, we have a ``weight'': $s/r\in \bN$. It is possible to build Witt vectors on other weighted posets of this form. For instance, Dress-Siebeneicher's construction of Witt vectors relative to a profinite group  \cite{DrSi88} is built on the poset of finite index subgroups of a profinite group. Our new construction described  below is built on $\bN^n$.

\subsection{Witt vectors on $\bN^n$}
Now we wish to replace $\bN$ by $\bN^n$. Coordinate multiplication gives an action by $\bN$ and hence a weighted poset: $(t_1,\ldots,t_n)$ divides $(s_1,\ldots,s_n)$ if there is some $d \in \bN$ such that $(dt_1,\ldots,dt_n)=(s_1,\ldots,s_n)$, and the number $d$ is the weight. We say that $S \subset \bN^n$ is a truncation set in $\bN^n$ if it is a closed subposet: if $(s_1,\ldots,s_n) \in S$ and $(t_1,\ldots,t_n)$ divides $(s_1,\ldots,s_n)$, then $(t_1,\ldots,t_n) \in S$. From now on we will use vector notation for $n$-tuples of natural numbers. 

Just as in the $1$-dimensional case, we can consider the truncation set generated by a single element:
\begin{equation}\label{eq:anglevects}
\langle\vect{s}\rangle = \big\{ \vect{u}\in\mathbb N^{n}, \vect{u}\mid\vect{s}\big\}.
\end{equation}
This is not the only way to associate $1$-dimensional truncation sets to elements of $\mathbb N^{n}$, however. As a weighted poset, $\bN^n$ splits into a disjoint union of countably infinitely many copies of $\bN$, indexed by those sequences $\vect{s}$ with $\gcd(\vect{s})=1$:
\[
\bN^n=\coprod_{\gcd(\vect{s})=1}\bN\cdot\vect{s}.
\]
Thus our truncation sets in $\bN^n$ are simply disjoint unions of ordinary truncation sets. Given $\vect{s} \in S$, we use the notation $\bN \vect{s} \cap S$ to denote all multiples of $\vect{s}$ in $S$. This can be thought of as a $1$-dimensional truncation set by identifying $\vect{s} \in S$ with $1 \in \bN$.

We can generalize the classical construction of Witt vectors on $\bN$ to Witt vectors on $ \bN^n$, and using the above decomposition of $ \bN^n$ we can understand exactly what our generalization produces:
Given a truncation set $S$ in $\bN^n$, we construct the Witt vectors $\bW_S(R)$. As a set, $\bW_S(R) = R^S$. We define a ghost map
\[ 
w : \bW_S(R) \to R^S
 \]
as the map that takes a vector $(a_{\vect{s}})_{\vect{s} \in S}$ to the vector $(w_{\vect{s}})_{\vect{s} \in S}$ with
\[ 
w_{\vect{s}} = \sum_{d\vect{u}=\vect{s}} \gcd(\vect{u}) (a_{\vect{u}})^d.
\]

\begin{lemma}
There is a unique functorial way to put a ring structure on $\bW_S(R)$ in such a way that the ghost map is a ring map.

Moreover, there is a canonical splitting
\[ 
\bW_S(R) \cong \prod_{\gcd(\vect{s})=1} \bW_{\bN \vect{s} \cap S}(R).
\]
\end{lemma}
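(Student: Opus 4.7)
The plan is to reduce both assertions to the one-variable case recalled at the start of the section, by exploiting the decomposition of $\bN^n$ into weighted posets $\bN\cdot\vect{s}_0$ indexed by primitive vectors. For each primitive $\vect{s}_0$ (i.e., $\gcd(\vect{s}_0)=1$) set $S_{\vect{s}_0}=\{d\in\bN : d\vect{s}_0\in S\}$. The divisors of $d\vect{s}_0$ in $\bN^n$ are exactly the vectors $e\vect{s}_0$ with $e\mid d$, so since $S$ is a truncation set each $S_{\vect{s}_0}$ is a classical truncation set in $\bN$, and the correspondence $e\vect{s}_0\leftrightarrow e$ assembles into a bijection $S\cong \coprod_{\gcd(\vect{s}_0)=1} S_{\vect{s}_0}$ compatible with the divisibility orderings.

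The key observation is that under this identification the $n$-dimensional ghost map becomes literally the product of classical ghost maps. For $\vect{s}=d\vect{s}_0$, every divisor has the form $\vect{u}=e\vect{s}_0$ with $e\mid d$; primitivity of $\vect{s}_0$ gives $\gcd(\vect{u})=e$, and the weight $d'$ in $d'\vect{u}=\vect{s}$ is $d'=d/e$. Substituting into the definition yields
\[
w_{\vect{s}} \;=\; \sum_{e\mid d} e\,(a_{e\vect{s}_0})^{d/e},
\]
which is precisely the classical ghost coordinate $w_d$ of the vector $(a_{e\vect{s}_0})_{e\in S_{\vect{s}_0}}\in R^{S_{\vect{s}_0}}$. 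Hence the map $w\colon R^S\to R^S$ decomposes as $\prod_{\vect{s}_0}\bigl(w\colon \bW_{S_{\vect{s}_0}}(R)\to R^{S_{\vect{s}_0}}\bigr)$.

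Both halves of the lemma now follow formally. For existence and the splitting, I would endow each factor $\bW_{S_{\vect{s}_0}}(R)$ with its classical ring structure and take the product; the product of ring maps is a ring map, so the combined ghost map is a ring map, and the construction is manifestly functorial in $R$. For uniqueness I would argue in the standard Witt-vector fashion: by naturality it suffices to treat a universal torsion-free $R$ (a polynomial ring over $\bZ$ in the coordinate variables $a_{\vect{s}}$), where $w\otimes\bQ$ is injective on each factor, and therefore any ring structure on $\bW_S(R)$ making $w$ a ring map is forced to coincide with the product structure; the general case then follows by functoriality. I do not anticipate any serious obstacle — the entire argument is a reduction to the classical case, and the only substantive point is the identity $\gcd(e\vect{s}_0)=e$, which is precisely what causes the weight factor in the $n$-dimensional ghost formula to line up with the corresponding factor in the classical one.
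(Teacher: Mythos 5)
Your argument is correct and is essentially the paper's own proof: the paper likewise observes that $S$ decomposes as a disjoint union of classical truncation sets along the primitive vectors, so that both the uniqueness of the functorial ring structure and the splitting reduce to the classical one-variable facts. Your explicit check that $\gcd(e\vect{s}_0)=e$ makes the ghost map factor through the classical ghost maps is exactly the (unstated) verification behind the paper's terser statement, so no further changes are needed.
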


\begin{proof}
Since our truncation set splits as a disjoint union of classical truncation sets, this lemma is simply a restatement of two classical facts:
\begin{enumerate}
\item There is a unique functorial ring structure for a classical truncation set, and
\item the functor $S\mapsto R^S$ takes disjoint unions to Cartesian products.
\end{enumerate}
\end{proof}

The classical Witt construction had a great many structure maps linking the Witt vectors for various $S$. Here we have all of the classical ones and more. The real power of this construction is that we have various structure maps which mix the disjoint factors in $\bN^n$.

The restriction map is the easiest to define. Given $S' \subset S$ we get a restriction map
\begin{equation}\label{eq:restrict}
R^S_{S'} : \bW_S(R) \to \bW_{S'}(R)
\end{equation}
by taking the obvious projection $R^S\to R^{S'}$.  Since a truncation set contains all the divisors of any of its elements, this projection commutes with applying the ghost maps, so it is in fact a ring homomorphism.

We have Frobenius maps in each of the $n$ directions. For each $i \in \{1,\ldots,n\}$ and $r \geq 2$, we have a truncation set
\[
S/(1,\ldots,r\ldots,1) = \{(t_1,\ldots,t_n), (t_1,\ldots,rt_i,\ldots,t_n) \in S \}.
\]
We can define a Frobenius map
\[ 
F^i_r : \bW_S(R) \to \bW_{S/(1,\ldots,r,\ldots,1)}(R) 
\]
by requiring that the diagram
\[
 \xymatrix{ \bW_S(R) \ar[r]^-w \ar[d]^{F^i_r} & R^S \ar[d]^{(F^i_r)^w} \\
\bW_{S/(1,\ldots,r,\ldots,1)}(R) \ar[r]^-w & R^{S/(1,\ldots,r\ldots,1)} 
} 
\]
commutes. Here $(F^i_r)^w$ is defined by
\[
(F^i_r)^w(x_{\vect{s}})_{(t_1,\ldots,t_n)} = x_{(t_1,\ldots,rt_i,\ldots,t_n)}.
\]
We note that $F^i_r$ and $F^j_s$ commute if $i \neq j$.

Associated to these Frobenius maps, we also have Verschiebung maps in each of the $n$ directions. We can define a Verschiebung map
\[
V^i_r : \bW_{S/(1,\ldots,r,\ldots,1)}(R) \to \bW_S(R)
\]
by
\[
V^i_r\big((x_{\vect{s}})\big)_{(t_1,\ldots,t_n)} = \begin{cases} x_{(t_1,\ldots,t_i/r,\ldots,t_n)} & \textnormal{if $t_i/r$ is an integer} \\ 0 & \textnormal{otherwise} \end{cases}
\]
Once again we note that $V^i_r$ and $V^j_s$ commute if $i \neq j$.

We need some basic computations with these Witt vectors on $\bN^n$. For the next three propositions, fix a truncation set $S$, let $\vect{s}\in S$ have $\gcd(\vect{s})=1$, and let $S'=\bN\cdot \vect{s}\cap S$. The truncation set $S'$ can be considered as a truncation set in $\bN^n$ or as a classical one, via the identification of  $\bN\cdot \vect{s}$ with $\bN$.  The ghost maps and ring structure we get on $ \bW_{S'}(R)$  are the same in either case, so we will not distinguish between the two. We can use the canonical splitting of $\bN^n$ into its disjoint factors to study the Frobenii and Verschiebungs on individual factors, each of which  can  be thought of as classical Witt vectors:

\begin{proposition}
The map $V^i_r$ splits as a Cartesian product of maps
\[ 
V^i_r : \bW_{S'/(1,\ldots,r,\ldots,1)}(R) \to \bW_{S'}(R),
\]
and similarly for $F^i_r$.
\end{proposition}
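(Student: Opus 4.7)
The plan is to reduce to the classical one-variable Witt vector theory by exhibiting a set-theoretic decomposition of $S/(1,\ldots,r,\ldots,1)$ that is compatible with the canonical ray decompositions of both source and target, and then verifying that $V^i_r$ and $F^i_r$ act diagonally with respect to it.

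First I would partition $S/(1,\ldots,r,\ldots,1)$ by sorting each $\vect{t}$ according to which ray $S' = \bN\vect{s}\cap S$ of $S$ contains $(t_1,\ldots,rt_i,\ldots,t_n)$, yielding
\[
S/(1,\ldots,r,\ldots,1) = \bigsqcup_{\gcd(\vect{s})=1} S'/(1,\ldots,r,\ldots,1).
\]
The key combinatorial step is to show that each summand is itself a single ray of $\bN^n$. Setting $d=\gcd(r,s_i)$, $r=dr'$, $s_i=ds'$, for $\vect{t}\in S'/(1,\ldots,r,\ldots,1)$ with $(t_1,\ldots,rt_i,\ldots,t_n)=m\vect{s}$, integrality of $t_i = ms_i/r$ forces $r'\mid m$; writing $m=r'm'$ then gives $\vect{t} = m'\vect{s}^*$ where
\[
\vect{s}^* := (r's_1,\ldots,r's_{i-1},s',r's_{i+1},\ldots,r's_n).
\]
A short gcd argument, using $\gcd(r',s')=1$ together with $\gcd(\vect{s})=1$, shows $\gcd(\vect{s}^*)=1$, so $S'/(1,\ldots,r,\ldots,1) = \bN\vect{s}^*\cap S/(1,\ldots,r,\ldots,1)$ is indeed a ray. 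In particular, the fibering above coincides with the canonical ray decomposition of $S/(1,\ldots,r,\ldots,1)$.

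By the preceding lemma this gives a ring isomorphism
\[
\bW_{S/(1,\ldots,r,\ldots,1)}(R) \cong \prod_{\gcd(\vect{s})=1} \bW_{S'/(1,\ldots,r,\ldots,1)}(R),
\]
matched with the target splitting $\bW_S(R)\cong \prod_{\gcd(\vect{s})=1}\bW_{S'}(R)$. To conclude I would check that both structure maps are block diagonal. For $V^i_r$ this is immediate from the defining formula: if $x$ is supported on $S'/(1,\ldots,r,\ldots,1)$, then $(V^i_r x)_{\vect{t}}$ can be nonzero only when $r\mid t_i$ and $(t_1,\ldots,t_i/r,\ldots,t_n)\in S'/(1,\ldots,r,\ldots,1)$, which is equivalent to $\vect{t}\in S'$. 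For $F^i_r$ I would argue at the ghost level, where $(F^i_r)^w x_{(t_1,\ldots,t_n)} = x_{(t_1,\ldots,rt_i,\ldots,t_n)}$ manifestly sends ghost support on $S'$ to ghost support on $S'/(1,\ldots,r,\ldots,1)$; the uniqueness of the Witt ring structure making the ghost diagram commute then transfers block-diagonality back to $F^i_r$ itself.

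The main obstacle is the primitivity calculation for $\vect{s}^*$ in step two; once the two partitions of $S/(1,\ldots,r,\ldots,1)$ are matched, block-diagonality of both structure maps is forced by the defining formulas.
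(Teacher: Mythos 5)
Your proof is correct and takes essentially the same approach as the paper, which states this proposition without proof as an immediate consequence of the ray decomposition $\bN^n=\coprod\bN\cdot\vect{s}$; your identification of $S'/(1,\ldots,r,\ldots,1)$ as the single ray generated by $\vect{s}^*$ is precisely the computation the paper records in the subsequent proposition, and your block-diagonality checks for $V^i_r$ (componentwise) and $F^i_r$ (at the ghost level, transferred by naturality) are the routine details the paper leaves to the reader.
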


This lets us focus attention on the simple factors.  Fix an $i$, and let $d_i=\gcd(s_i,r)$ and let $e_i=r/d_i$.

\begin{proposition}\label{identification}
We have an isomorphism between $S'/(1,\ldots,r,\ldots,1)$ considered as a truncation set in $\bN^n$ and $S'/e_i$ considered as a $1$-dimensional truncation set. 
\end{proposition}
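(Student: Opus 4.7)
The plan is to write $S'/(1,\ldots,r,\ldots,1)$ explicitly as a subset of a single ``ray'' $\bN\vect{s}^{*} \subset \bN^n$ with $\gcd(\vect{s}^{*})=1$, and then read off which $1$-dimensional truncation set it corresponds to under the identification $\bN\vect{s}^{*}\cong\bN$.

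First I would unwind the definitions. A point $(t_1,\ldots,t_n)$ lies in $S'/(1,\ldots,r,\ldots,1)$ if and only if $(t_1,\ldots,rt_i,\ldots,t_n) \in S' = \bN\vect{s} \cap S$. Since $\gcd(\vect{s})=1$, this forces $(t_1,\ldots,rt_i,\ldots,t_n) = k\vect{s}$ for a unique $k \in \bN$ with $k\vect{s} \in S$. So $t_j = ks_j$ for $j \neq i$, and $t_i = ks_i/r$ must be a positive integer. Writing $s_i = d_i s_i'$ and $r = d_i e_i$ with $\gcd(s_i', e_i)=1$, the integrality of $t_i$ becomes exactly $e_i \mid k$. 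So I set $k = m e_i$ with $m \in \bN$ and read off $t_i = m s_i'$.

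Next I would define $\vect{s}^{*} := (e_i s_1, \ldots, e_i s_{i-1},\, s_i',\, e_i s_{i+1}, \ldots, e_i s_n)$, so that every $(t_1,\ldots,t_n) \in S'/(1,\ldots,r,\ldots,1)$ has the form $m\vect{s}^{*}$. A short gcd computation shows $\gcd(\vect{s}^{*})=1$: any common prime divisor would divide $s_i' \mid s_i$ and every $s_j$ for $j \neq i$, contradicting $\gcd(\vect{s})=1$. Therefore $\bN\vect{s}^{*}$ is one of the disjoint components in $\bN^n = \coprod_{\gcd(\vect{u})=1}\bN\vect{u}$, and is isomorphic to $\bN$ as a weighted poset via $m\vect{s}^{*} \leftrightarrow m$. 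Under this identification, the membership condition $m\vect{s}^{*} \in S'/(1,\ldots,r,\ldots,1)$ unwinds to $me_i\vect{s} \in S$, which is precisely the classical condition $m \in S'/e_i$ once $\bN\vect{s}\cap S$ is regarded as a $1$-dimensional truncation set via $k\vect{s}\leftrightarrow k$. This gives the required isomorphism.

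The only bits of bookkeeping that need any thought are (i) the coprimality $\gcd(\vect{s}^{*})=1$, and (ii) the observation that divisors in $\bN^n$ of an element $m\vect{s}^{*}$ lying on the ray $\bN\vect{s}^{*}$ are automatically of the form $m'\vect{s}^{*}$ with $m'\mid m$ (because $\gcd(\vect{s}^{*})=1$ pins down the direction). Together, (i) and (ii) ensure that the closure-under-divisibility conditions in $\bN^n$ and in $\bN\cong \bN\vect{s}^{*}$ agree, so the bijection respects truncation-set structure. Neither step is a serious obstacle; the content of the proposition is really just tracking how the operation ``divide coordinate $i$ by $r$'' on $\bN^n$ translates through the identification $\bN\vect{s}\cong\bN$, and the combinatorial data $d_i$, $e_i$ are exactly what records this.
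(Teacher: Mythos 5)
Your argument is correct and follows essentially the same route as the paper's proof: both identify the generator of the ray containing $S'/(1,\ldots,r,\ldots,1)$ as $(e_i s_1,\ldots,s_i/d_i,\ldots,e_i s_n)$ (the paper writes this as $r/d_i\cdot(s_1,\ldots,s_i/r,\ldots,s_n)$) and conclude that under the identification with $\bN$ the set consists exactly of those $m$ with $me_i\vect{s}\in S$, i.e.\ $S'/e_i$. Your write-up just makes explicit the bookkeeping (the condition $e_i\mid k$ via $\gcd(s_i',e_i)=1$, and the coprimality $\gcd(\vect{s}^*)=1$) that the paper leaves implicit.
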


\begin{proof}
In the decomposition of $\bN^n$ into multiples of vectors with coordinates having a greatest common divisor of $1$, $S'/(1,\ldots,r,\ldots,1)$ consists of multiples of $r/d_i \cdot (s_1,\ldots, s_i/r,\ldots, s_n)$, where the multiplier ensures that the $i$'th coordinate an integer.  So when viewed as  a $1$-dimensional truncation set, $S'/(1,\ldots,r,\ldots,1)$ contains exactly those $a$'s for which $a e_i(s_1,\ldots,s_n)\in S'$.
\end{proof}

Since we are expressing our truncation set in terms of classical truncation sets, the classical results tell us the value of the Frobenius and Verschiebung.

\begin{proposition}
If we identify $\bW_{S'/(1,\ldots,r,\ldots,1)}(R)$ with $\bW_{S'/{e_i}} ( R ) $, then $V^i_r$ is given by
\[ 
\bW_{S'/e_i}(R) \xrightarrow{V_{e_i}}\bW_{S'}(R),
\]
where $V_{e_i}$ is the classical Verschiebung.

Similarly, the Frobenius map $F^i_r$ is given by
\[ 
\bW_{S'}(R) \xrightarrow{F_{e_i}} \bW_{S'/e_i}(R).
\]
\end{proposition}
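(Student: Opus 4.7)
The plan is to reduce everything to a single ray and then invoke uniqueness of the classical Frobenius--Verschiebung maps. By the preceding proposition, both $V^i_r$ and $F^i_r$ split as products indexed by the rays, so without loss of generality I may restrict attention to a single factor $\bW_{S'}(R)$. Proposition~\ref{identification} furnishes an isomorphism of truncation sets between $S'/(1,\ldots,r,\ldots,1)\subset\bN^n$ and the classical $S'/e_i$, and I would parametrize this very concretely: writing $\vect{v} = e_i\cdot(s_1,\ldots,s_i/r,\ldots,s_n) = (e_is_1,\ldots,s_i',\ldots,e_is_n)$ where $s_i = d_is_i'$, the element $a\vect{v}$ on the $\bN^n$-side corresponds to $a\in S'/e_i$ on the classical side, while $b\vect{s}\in S'$ corresponds to $b\in S'_1$, the classical avatar of $S'$.

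The Verschiebung case follows directly from its coordinate definition. Evaluating $V^i_r(y)$ at $b\vect{s}$, the $i$-th coordinate is $bs_i/r = bs_i'/e_i$, which is an integer if and only if $e_i\mid b$ (using the key fact that $\gcd(s_i',e_i)=1$). Writing $b = ae_i$, the nonzero value is $y$ evaluated at $(ae_is_1,\ldots,as_i',\ldots,ae_is_n) = a\vect{v}$. Under the identifications $b\leftrightarrow b\vect{s}$ and $a\leftrightarrow a\vect{v}$, this is exactly the classical rule $V_{e_i}(y)_b = y_{b/e_i}$ when $e_i\mid b$ and $0$ otherwise.

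For the Frobenius I would exploit uniqueness: $F^i_r$ is characterized by the commutativity of its defining ghost diagram, so it suffices to check that both the ghost maps and the coordinate shift $(F^i_r)^w$ correspond to their classical counterparts. Since $\gcd(\vect{s})=1$, every divisor of $b\vect{s}$ in $\bN^n$ has the form $(b/d)\vect{s}$ with $d\mid b$ and $\gcd((b/d)\vect{s})=b/d$, so
\[
w_{b\vect{s}} \;=\; \sum_{d\mid b}(b/d)\bigl(a_{(b/d)\vect{s}}\bigr)^d,
\]
which is precisely the classical ghost component $w_b$. The coordinate shift at position $a\vect{v}$ reads off the entry at $(ae_is_1,\ldots,ras_i',\ldots,ae_is_n)$, and $ras_i' = ae_i\cdot d_is_i' = ae_is_i$ shows this vector equals $ae_i\vect{s}$; so the shift corresponds to shifting by $e_i$ on the classical side, which is the defining formula for $(F_{e_i})^w$. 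Uniqueness of the ring homomorphism making the ghost diagram commute then finishes the argument.

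The only real obstacle is keeping the arithmetic bookkeeping clean, in particular making sure $\vect{v}$ genuinely generates $S'/(1,\ldots,r,\ldots,1)$ as a ray and that one uses the correct coprimality $\gcd(s_i',e_i)=1$ rather than the potentially false $\gcd(s_i',r)=1$. Beyond that, the identification of maps is a routine translation of the definitions through Proposition~\ref{identification}.
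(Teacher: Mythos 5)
Your argument is correct and follows essentially the same route as the paper's proof: the Verschiebung is identified by evaluating its coordinate definition on the ray and translating through Proposition~\ref{identification}, and the Frobenius is identified by matching ghost maps and the coordinate shift, then invoking uniqueness of the map making the ghost diagram commute. Your extra bookkeeping (the generator $\vect{v}=(e_is_1,\ldots,s_i/d_i,\ldots,e_is_n)$ and the coprimality $\gcd(s_i/d_i,e_i)=1$) just makes explicit what the paper leaves implicit.
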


In particular, this means that the composite $F^i_r V^i_r$ is multiplication by $e_i$ on $\bW_{S'/(1,\ldots,r,\ldots,1)}(R)$.

\begin{proof}
Let $\vect{x} = (x_{\vect{t}})_{\vect{t} \in S'/(1,\ldots,r,\ldots,1)}$ be an element of $\bW_{S'/(1,\ldots,r,\ldots,1)}(R)$. Then by definition,
\[
V^i_r(\vect{x})_{a(s_1,\ldots,s_n)}=\vect{x}_{a(s_1,\ldots,s_i/r,\ldots,s_n)}.
\]
Translated under the identification in Proposition \ref{identification} this says that if we view $\vect{x}$ as an element of $\bW_{S'/{e_i}} ( R ) $ (where $S'/e_i$ is considered as a one-dimensional truncation set)  then the $a$'th coordinate of $V^i_r(\vect{x})$ in $\bW_{S'} $ is the $a/e_i$'th coordinate of $\vect{x} \in \bW_{S'/{e_i}} ( R ) $. Thus the map $V^i_r$ agrees with the classical Verschiebung $V_{e_i}$ under the identifications from Proposition \ref{identification}.

In the case of the Frobenius, the ghost maps are the same as in the classical one-dimensional case, and the ghost version of  $F^i_r$ is just the classical ghost version of $F_{e_i}$. Thus $F^i_r$ itself must be equal to the classical $F_{e_i}$.
\end{proof}

\section{cyclotomic spectra} \label{s:cyclotomic}
Certain $S^1$-spectra, called \emph{cyclotomic spectra}, are particularly important to computations of algebraic $K$-theory. We first recall the definition of a cyclotomic spectrum \cite{HeMa97}. Let $G$ be a compact Lie group. Given a genuine $G$-spectrum $X$ indexed on a complete $G$-universe $\mathcal{U}$, and a normal subgroup $H \vartriangleleft G$, there are two notions of $H$-fixed points \cite{LMS}. Both notions of fixed points yield a $G/H$-(pre)spectrum. The first, denoted $X^H$, has $V$\textsuperscript{th} space
\[
X^H(V) = X(V)^H
\] for each $V \subset \mathcal{U}^H$. The second notion, that of geometric fixed points, is defined as follows. Let $\mathcal{F}_H$ denote the family of subgroups of $G$ not containing $H$. Let $E\mathcal{F}_H$ denote the universal space of this family, and let $\widetilde{E}\mathcal{F}_H$ denote the cofiber of the map $E\mathcal{F}_{H+} \to S^0$ given by projection onto the non-basepoint. Then the geometric fixed point spectrum $X^{gH}$ is defined as
\[
X^{gH} = (X\sma \widetilde{E}\mathcal{F}_H)^H.
\]  
The fixed points and geometric fixed points are related as follows. Starting with the cofiber sequence
\[
E\mathcal{F}_{H+} \to S^0 \to \widetilde{E}\mathcal{F}_{H},
\]
then smashing with X and taking $H$-fixed points yields a cofiber sequence, the isotropy separation sequence
\[
(X \sma E\mathcal{F}_{H+})^H \to X^H \to (X\sma \widetilde{E}\mathcal{F}_{H})^H = X^{gH}.
\]
Thus there is always a map $X^H \to X^{gH}$ from the $H$-fixed points of $X$ to the $H$-geometric fixed points of $X$. If $X$ is a structured ring spectrum, this is a map of structured ring spectra. 

We are now ready to recall the definition of cyclotomic spectra. Let $C_n \subset S^1$ denote the cyclic subgroup of order $n$, and let $\rho_n: S^1 \rightarrow S^1/C_n$ denote the isomorphism given by the $n$\textsuperscript{th} root. Using this isomorphism, an $S^1/C_n$-spectrum $E$ indexed on $\mathcal{U}^{C_n}$ determines an $S^1$-spectrum indexed on $\mathcal{U}$. We write this spectrum as $\rho_{n}^*E$. 

\begin{defn}\label{CycloSpec}
A cyclotomic spectrum is a genuine $S^1$-equivariant spectrum $X$ together with equivalences of $S^1$-spectra
\[
r_n: \rho_n^* (X^{gC_n}) \xrightarrow{\simeq} X
\]
for all $C_n \subset S^1$, such that for any $m, n >0$ the following diagram commutes
\[
\xymatrix{ \rho_n^* \Big(\big({\rho_m^*(X^{gC_m}})\big)^{gC_n}\Big) \ar@{=}[r] \ar[d]^-{\rho_n^*\big((r_m)^{gC_n}\big)} & \rho_{nm}^*(X^{gC_{nm}}) \ar[d]^-{r_{mn}} \\ \rho_n^*(X^{gC_n}) \ar[r]^-{r_n} & X }
\]
\end{defn}

We will also need the notion of a cyclotomic map between cyclotomic spectra.

\begin{defn}
A cyclotomic map $f : X \to Y$ between cyclotomic spectra is a map of genuine $S^1$-equivariant spectra such that the diagram
\[
\xymatrix{ \rho_n^* (X^{gC_n}) \ar[r]^-{r_n} \ar[d] & X \ar[d] \\ \rho_n^* (Y^{gC_n}) \ar[r]^-{r_n} & Y }
\]
commutes for all $n$.
\end{defn}

It is clear that cyclotomic spectra together with cyclotomic maps form a category.  A cyclotomic spectrum can be built by taking the suspension spectrum of  a cyclotomic space, as in Example \ref{loops} below, but cyclotomic spectra also arise in other ways.

\begin{example}\label{THH}
For a ring $A$, the topological Hochschild homology $S^1$-spectrum $\THH(A)$ is a cyclotomic spectrum \cite{HeMa97}.
\end{example}

The importance of cyclotomic spectra stems from the fact that a cyclotomic spectrum $X$ comes with the structure required to define the topological cyclic homology of $X$, $\TC(X)$, following \cite{ BHM}. This is done as follows. For any $S^1$-spectrum $X$, there are maps $F_n : X^{C_{mn}} \to X^{C_m}$ for all $n,m\geq 1$, given by inclusion of fixed points. These maps are called Frobenius maps. If $X$ is cyclotomic, there is a second class of maps: $R_n : X^{C_{mn}} \to X^{C_m}$, for all $n,m \geq 1$. These maps, called restriction maps, are given as composites
\[
X^{C_{mn}} = \big(\rho_n^* (X^{C_n})\big)^{C_m} \to \big(\rho_n^* (X^{gC_n})\big)^{C_m} \xrightarrow{r_n^{C_m}} X^{C_m}
\]
where the first map is the map from fixed points to geometric fixed points described earlier in the section. 
The topological cyclic homology of $X$ is then defined as
\[
\TC(X) = \holim_{R,F} X^{C_n}.
\]
As discussed in Example \ref{THH}, for a ring $A$, the $S^1$-spectrum $\THH(A)$ is cyclotomic. Hence we can take the topological cyclic homology of this spectrum. The resulting spectrum $\TC\big(\THH(A)\big)$ is usually denoted $\TC(A)$, and we will use this convention as well. This is the topological cyclic homology of the ring $A$.  
For our application to algebraic $K$-theory, three sources of cyclotomic spectra will be particularly important:
\begin{enumerate}
\item Topological Hochschild homology of rings
\item Homotopy fibers of cyclotomic maps
\item Suspension spectra of cyclotomic spaces
\end{enumerate}
The first source of cyclotomic spectra, topological Hochschild homology, has been used extensively in algebraic $K$-theory computations. The consideration of homotopy fibers of cyclotomic maps as cyclotomic spectra is used implicitly in work of Blumberg and Mandell \cite{BlMa12}. The third approach can be thought of as building cyclotomic spectra by hand. Below, we describe in detail the latter two approaches to generating cyclotomic spectra. 

\subsection{Cyclotomic spectra arising as homotopy fibers}

For a cyclotomic map $f : X \to Y$, it follows immediately that the induced maps $X^{C_n} \to Y^{C_n}$ commute with $R$ and $F$. Hence we get an induced map $\TC(X) \to \TC(Y).$

\begin{proposition}\label{homotopyfiber}
Suppose $f : X \to Y$ is a cyclotomic map. Then
\begin{enumerate}
\item The homotopy fiber $\hofib(f)$ is cyclotomic.
\item There is a natural equivalence $ \TC\big(\hofib(f)\big)\xrightarrow{\simeq} \hofib\big(\TC(X) \rightarrow \TC(Y)\big)$

\end{enumerate}
\end{proposition}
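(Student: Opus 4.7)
The plan is to bootstrap everything from a single input: geometric fixed points preserve homotopy fiber sequences of genuine $G$-spectra. This is standard (it follows because smashing with $\widetilde{E}\mathcal{F}_{C_n}$ is exact in the stable category and categorical $H$-fixed points on genuine equivariant spectra preserve fiber sequences), so we may treat it as a black box; the same is true of the invariance $\rho_n^*$, which is just a re-indexing of a fixed-point spectrum.

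For part (1), start with the defining homotopy fiber sequence
\[
\hofib(f)\longrightarrow X \xrightarrow{\ f\ } Y
\]
of genuine $S^1$-spectra, and apply $\rho_n^*(-)^{gC_n}$ to get a fiber sequence
\[
\rho_n^*\big(\hofib(f)^{gC_n}\big)\longrightarrow \rho_n^*(X^{gC_n})\longrightarrow \rho_n^*(Y^{gC_n}).
\]
Since $f$ is cyclotomic, the structure equivalences $r_n^X$ and $r_n^Y$ assemble into a commutative square whose rows are the two fiber sequences, so the induced map on homotopy fibers gives a map $r_n^{\hofib(f)}\colon \rho_n^*(\hofib(f)^{gC_n})\to \hofib(f)$, which is an equivalence by the long exact sequence / five lemma. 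For the compatibility condition in Definition~\ref{CycloSpec}, both candidate composites $\rho_n^*\big((r_m^{\hofib(f)})^{gC_n}\big)$ and $r_{nm}^{\hofib(f)}$ arising from iterating the construction are the unique map on fibers induced by the corresponding commuting squares for $X$ and $Y$; so they agree because the analogous cocycles hold for $X$ and $Y$.

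For part (2), apply genuine $C_n$-fixed points to the fiber sequence to get, for each $n$, a homotopy fiber sequence
\[
\hofib(f)^{C_n}\longrightarrow X^{C_n}\longrightarrow Y^{C_n},
\]
and observe that these assemble into a map of diagrams indexed by the $R,F$-system, because $f$ is $S^1$-equivariant and cyclotomic (so it commutes with both Frobenius and restriction). Homotopy limits commute with homotopy fibers, which gives
\[
\TC(\hofib(f))=\holim_{R,F}\hofib(f)^{C_n}\simeq \hofib\!\big(\holim_{R,F} X^{C_n}\to\holim_{R,F} Y^{C_n}\big)=\hofib(\TC(X)\to \TC(Y)).
\]

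The one genuinely delicate point I expect is bookkeeping in the cocycle diagram of Definition~\ref{CycloSpec}: one needs that the map $r_n^{\hofib(f)}$ built by the universal property of homotopy fibers behaves correctly under the identification $\rho_n^*\big(\rho_m^*(-)^{gC_m}\big)^{gC_n}=\rho_{nm}^*(-)^{gC_{nm}}$. This is essentially uniqueness of the induced map on fibers, but making it rigorous requires choosing models in which the equivalences $r_n$ are strictly represented; everything else reduces to the exactness of geometric fixed points and the standard fact that $\holim$ commutes with $\hofib$.
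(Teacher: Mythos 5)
Your proposal is correct and follows essentially the same route as the paper: apply the exactness of (geometric) fixed points to the fiber sequence to produce the cyclotomic structure map on $\hofib(f)$, and then use that $\TC$ and $\hofib$ are both homotopy limits, hence commute. The only difference is that you additionally spell out the verification of the compatibility diagram in Definition~\ref{CycloSpec}, which the paper leaves implicit; that extra care is fine and does not change the argument.
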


\begin{proof}
The homotopy fiber is taken in the category of $S^1$-spectra, so it is an $S^1$-spectrum. Taking fixed points or geometric fixed points preserves homotopy (co)fiber sequences, so we get a diagram
\[
\xymatrix{ \rho_n^*\big( \hofib(f)^{gC_n}\big) \ar[r] \ar@{-->}[d] & \rho_n^* (X^{gC_n}) \ar[r] \ar[d]^{\simeq} & \rho_n^*( Y^{gC_n} )\ar[d]^{\simeq} \\ \hofib(f) \ar[r] & X \ar[r] & Y}
\]
where the rows are homotopy (co)fiber sequences. It follows that we have an equivalence $\rho_n^{*} \big(\hofib(f)^{gC_n}\big) \to \hofib(f)$ of $S^1$-spectra. For the second claim, note that both $\TC$ and $\hofib$ are homotopy limits, so they commute. Hence we have a chain of equivalences
\begin{multline*}
\TC(\hofib(f)) = \holim_{R,F} \hofib(X \to Y)^{C_n} \xrightarrow{\simeq} \holim_{R,F} \hofib(X^{C_n} \to Y^{C_n}) \\
\xrightarrow{\simeq} \hofib(\holim_{R,F} X^{C_n} \to \holim_{R,F} Y^{C_n} ) = \hofib(\TC(X) \to \TC(Y)).\qedhere
\end{multline*}
\end{proof}

\begin{example}
Suppose we have a map $f: A \to B$ of rings. Then the induced map $\THH(A) \to \THH(B)$ is cyclotomic. The homotopy fiber is sometimes denoted $\THH(f)$. If our map is the quotient $A \to A/J$, then the homotopy fiber is usually denoted $\THH(A, J)$. It follows that the two possible definitions of $\TC(f)$, as $\TC(\THH(f))$ or as $\hofib\big(\TC(A) \to \TC(B)\big)$, agree.

More generally, if $J_1,\ldots,J_n$ are ideals in $A$ we define the multi-relative cyclotomic spectrum $\THH(A, J_1,\ldots,J_n)$ as the homotopy pullback, or iterated homotopy fiber, of the $n$-cube which in position $I$ for $I \subset \{1,2,\ldots,n\}$ is $\THH(A/J_I)$, where $J_I $ is the sum of all the $J_i $ with $ i \in I$. It follows that the two possible definitions of $\TC(A,J_1,\ldots,J_n)$ agree. When the ideals are clear from the context we will denote these multi-relative spectra by $\widetilde{\THH}(A)$ and $\widetilde{\TC}(A)$
\end{example}

%
%
\subsection{Suspension spectra of cyclotomic spaces}

The category of cyclotomic spectra is tensored over a corresponding category of cyclotomic spaces.

\begin{defn}
A cyclotomic space $A$ is an $S^1$-equivariant space together with compatible equivalences
\[ 
r_n: \rho_n^* (A^{C_n}) \xrightarrow{\cong} A 
\]
of $S^1$-spaces for all $n$.
\end{defn} 
A map of cyclotomic spaces is defined in the analogous way to that of a cyclotomic spectrum. The $S^1$-equivariant suspension spectrum of a cyclotomic space is a cyclotomic spectrum. Indeed, $\Sigma^\infty_{S^1}(-_+)$ is a functor from unbased cyclotomic spaces to cyclotomic spectra, and $\Sigma^\infty_{S^1}(-)$ is a functor from based cyclotomic spaces to cyclotomic spectra. Below we give several important examples of cyclotomic spaces. A number of these will naturally arise in our algebraic $K$-theory computations. 

\begin{example}\label{loops}
A free loop space $LX$ is a cyclotomic space, 
where $S^1$ acts by rotation of loops. There is an equivariant homeomorphism
\[
r^{-1}\colon LX \rightarrow \rho_n^* \big((LX)^{C_n}\big)
\]
given by $r(\lambda)(z) = \lambda(z^n)$. The $S^1$-suspension spectrum of this space, $\Sigma^{\infty}_{S^1} LX_+$, is a cyclotomic spectrum. 
If we have a map of spaces $X \to Y$, then the induced map $LX \to LY$ is a map of cyclotomic spaces, and by taking suspension spectra we get a map $\Sigma^\infty_{S^1} LX_+ \to \Sigma^\infty_{S^1} LY_+$ of cyclotomic spectra.
\end{example}

\begin{example}\label{monoid}
If $\Pi$ is a pointed monoid, the cyclic bar construction $B^{cy}(\Pi)$ is a based cyclotomic space. This is  relevant to our computation because of the splitting:
\[
\THH(k(\Pi)) \simeq \THH(k) \sma B^{cy}(\Pi).
\]
of \cite{HeMa97}.  If we can describe how $B^{cy}(\Pi)$ is built out of cyclotomic spaces then we get a corresponding description of $\THH(k(\Pi)),$ which can then be used to calculate $\TC(k(\Pi))$.
\end{example}

\begin{example}\label{family}
Suppose $\{A(s)\}_{s\geq 1}$ is a family of $S^1$-equivariant based spaces with compatible homeomorphisms 
\[
\rho_n^* (A(s)^{C_n}) \approx \left\{ \begin{array}{cc} A(s/n) & \textup{if  } n|s \\ * & \textup{if   } n\nmid s \\ \end{array} \right.  
\]
Then
\[
A = \bigvee_{s \geq 1} A(s)
\]
is a based cyclotomic space.
\end{example}

\begin{example} \label{ex:cyclospaceconcrete}
As a concrete example of a family as described in Example \ref{family}, suppose we have real $S^1$-representations $\lambda(s)$ for $s \geq 1$ with the property that $\rho_n^* (\lambda(s)^{C_n}) \cong \lambda(s/n)$ whenever $n \mid s$. Then $\{S^{\lambda(s)}\}_{s \geq 1}$ is such a family. Further, if we let $A(s) = S^1/C_{s+} \sma S^{\lambda(s)}$, where $S^1$ acts on the first coordinate by left-multiplication and on the second by the obvious action (fixing the point at infinity) then $\{A(s)\}_{s\geq 1}$ is also such a family. Therefore 
\[
A = \bigvee_{s \geq 1} S^1/C_{s+} \sma S^{\lambda(s)}
\]
is a based cyclotomic space. 
\end{example}

\section{Algebraic $K$-theory Calculations} \label{s:KTheory}

For a ring $A$, there is a map relating the algebraic $K$-theory of $A$ and the topological cyclic homology of $A$:
\[
trc: K(A) \rightarrow \TC(A).
\]
This map, called the cyclotomic trace, is due to B\"okstedt, Hsiang, and Madsen \cite{BHM}. This map is often close to an equivalence \cite{Mc97, HeMa97, GeHe06}. Indeed, in the case of $A=k[x_1,\ldots,x_n]/(x_1^{a_n},\ldots,x_n^{a_n})$, the cyclotomic trace map on the multi-relative $K$-theory group $\widetilde{K}(A)$:
\[
trc: \widetilde{K}(A) \rightarrow \widetilde{\TC}(A)
\]
is an equivalence. Here the multi-relative $K$-theory spectrum $\widetilde{K}(A)$ is defined as a homotopy pullback in the same way as the multi-relative spectrum $\widetilde{\TC}(A)$. It follows from \cite{Dun97} that this map is an equivalence after $p$-completion. The proof that this can be extended to an integral statement can be found in  \cite{DuGoMc}.

To compute the multi-relative $K$-theory, we will compute the multi-relative topological cyclic homology. As noted above, $\TC(A) = \TC(\THH(A))$, so the first step is understanding the topological Hochschild homology of $A$. For any pointed monoid algebra $k(\Pi)$ there is  splitting \cite{HeMa97}: 
\[
\THH(k(\Pi)) \simeq \THH(k) \wedge B^{cy}(\Pi).
\]
Note that in the single variable case, $k[x]/(x^a)$ is a pointed monoid algebra $k(\Pi_a)$ where $\Pi_a$ is the monoid $\Pi_a = \{0, 1, x, \ldots, x^{a-1} \}, x^a=0$, so we have the splitting
\[
\THH(k[x]/(x^a)) \simeq \THH(k) \wedge B^{cy}(\Pi_a).
\]

To proceed with the $K$-theory computation, one needs to compute the fixed points of topological Hochschild homology.  The $S^1$-equivariant homotopy type of  $B^{cy}(\Pi)$ for a pointed monoid $\Pi$ has only been described in a small number of cases:   \cite{He07} for the pointed monoids  $\{0,1,z,z^2,\ldots\}$ and $\{0,1,x,x^2,\ldots,y,y^2,\ldots\}$ (where in the latter case, $xy=yx=0$), and \cite{HeMa97b} for the  pointed monoid $\Pi_a$.   Let $\widetilde{B}^{cy}(\Pi_a)$ denote the part of the cyclic bar construction involving at least one positive power of $x$ in one of the coordinates. Then
\[
\THH(k) \wedge \widetilde{B}^{cy}(\Pi_a)\simeq \THH(k[x]/(x^a),(x))=\hofib\big(  \THH(k[x]/(x^a)) \to\THH(k)\big)
\] 
is the relative topological Hochschild homology, which we will use to calculate the relative algebraic $K$-theory.   

Hesselholt and Madsen wrote $\widetilde{B}^{cy}(\Pi_a)$ as the homotopy cofiber of a map of $S^1$-spaces:
\begin{equation}
\label{eq:mapofcyclospaces} \bigvee_{s \geq 1} S^1/C_{s+} \sma S^{\lambda(s-1)} \to \bigvee_{s \geq 1} S^1/C_{s+} \sma S^{\lambda(\lfloor \frac{s-1}{a} \rfloor) }. 
\end{equation}
Here $\lambda(s)$ denotes the $S^1$-representation $\bC(1) \oplus \ldots \oplus \bC(s)$, where $\bC(i)$ is $\bC$ with $S^1$ acting by $z \cdot w = z^iw$. The wedge summand indexed by $s$ maps by a multiplication by $a$ map to the wedge summand indexed by $as$.  This can be found as Theorem B in \cite{HeMa97b}, or Theorem 5 in \cite{He05}.

\begin{lemma}\label{le:mapofcyclospaces}
The map in Equation \ref{eq:mapofcyclospaces} is a map of cyclotomic spaces.
\end{lemma}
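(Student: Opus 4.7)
The plan is to verify the cyclotomicity of $\phi$ in~(\ref{eq:mapofcyclospaces}) by checking that, for every $n \geq 1$, the square
\begin{equation*}
\xymatrix{
\rho_n^*(X^{C_n}) \ar[r]^-{r_n^X} \ar[d]_{\rho_n^*(\phi^{C_n})} & X \ar[d]^{\phi} \\
\rho_n^*(Y^{C_n}) \ar[r]^-{r_n^Y} & Y
}
\end{equation*}
commutes, where $X$ and $Y$ are the source and target of the map. Since $\phi$ respects the wedge decomposition, sending the $s$-th summand of $X$ into the $(as)$-th summand of $Y$, and since both $X$ and $Y$ are cyclotomic by Example~\ref{ex:cyclospaceconcrete}, the check will reduce to a summand-by-summand verification.

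First I would unpack the cyclotomic structures explicitly. For the summand $S^1/C_{s+} \sma S^{\lambda(s-1)}$ of $X$, the $C_n$-fixed points vanish unless $n\mid s$, and when $n\mid s$ they equal $S^1/C_s \sma S^{\lambda(s-1)^{C_n}}$. Applying $\rho_n^*$ together with the $S^1$-equivariant homeomorphism $S^1/C_s \xrightarrow{\cong} S^1/C_{s/n}$ arising from the $n$-th power map $S^1 \to S^1$, and the identification $\rho_n^*(\bC(i)^{C_n}) = \bC(i/n)$ for $n\mid i$ (which yields $\rho_n^*(\lambda(s-1)^{C_n}) = \lambda(s/n-1)$), identifies this summand of $\rho_n^*(X^{C_n})$ with the $(s/n)$-th summand of $X$. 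The same analysis applies to $Y$. Reindexing $s = nt$, the left vertical map $\rho_n^*(\phi^{C_n})$ then sends the $t$-th summand of $X$ into the $(at)$-th summand of $Y$ (coming from the summand of $Y$ indexed by $ant$, which reindexes to $at$). Thus both composites in the square agree at the level of wedge summands.

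The main obstacle is the resulting single-summand check: that Hesselholt--Madsen's multiplication-by-$a$ map from $S^1/C_{nt+} \sma S^{\lambda(nt-1)}$ to $S^1/C_{ant+} \sma S^{\lambda(nt-1)}$ becomes, after taking $C_n$-fixed points and applying $\rho_n^*$ together with the identifications above, the multiplication-by-$a$ map from $S^1/C_{t+} \sma S^{\lambda(t-1)}$ to $S^1/C_{at+} \sma S^{\lambda(t-1)}$. On the sphere factor the map is the identity, so this part is automatic from the fact that $\rho_n^*$ of the identity on $S^{\lambda(nt-1)^{C_n}}$ is the identity on $S^{\lambda(t-1)}$. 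On the orbit factor, the verification reduces to the naturality of Hesselholt--Madsen's multiplication-by-$a$ construction with respect to the canonical identifications; concretely, the algebraic identity $(z^n)^a = (z^a)^n$ in $S^1$ witnesses that the $n$-th power map used to build the cyclotomic structure isomorphism and the $a$-th power map used to build $\phi$ commute, giving the desired compatibility. Once this single-summand check is completed, the lemma follows.
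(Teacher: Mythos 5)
Your proposal is correct and follows essentially the same route as the paper: both spaces are cyclotomic by Example~\ref{ex:cyclospaceconcrete}, and the cyclotomic map condition is checked summand by summand via the commuting square relating the multiplication-by-$a$ map on the summand indexed by $s=nt$ to that on the summand indexed by $t$, using the identifications $\rho_n^*\big((S^1/C_{s})^{C_n}\big)\cong S^1/C_{s/n}$ and $\rho_n^*\big(\lambda(s-1)^{C_n}\big)\cong\lambda(s/n-1)$. The paper simply records this as a commutative diagram; your explicit observation that $(z^n)^a=(z^a)^n$ on the orbit factor is the same verification spelled out.
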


\begin{proof}
Following Example \ref{ex:cyclospaceconcrete}, both of the spaces in Equation \ref{eq:mapofcyclospaces} are cyclotomic spaces. 
If $r|s$, we have the following commutative diagram:
\[ 
\xymatrix{ \rho_r^*\Big(\big( S^1/C_{s+} \sma S^{\lambda(s-1)} \big)^{C_r}\Big) \ar[d]^{\approx} \ar[r]^-a & \rho_r^*\Big(\big( S^1/C_{as+} \sma S^{\lambda(s-1)} \big)^{C_r} \Big)\ar[d]^{\approx} \\ S^1/C_{s/r+} \sma S^{\lambda(s/r-1)} \ar[r]^-a & S^1/C_{as/r+} \sma S^{\lambda(s/r-1)} .} \]
So the map in Equation  \ref{eq:mapofcyclospaces} preserves the cyclotomic structure.
\end{proof}

To proceed with the calculation, we establish the following notation: Let
\[
X_\varnothing = \bigvee_{s \geq 1} S^1/C_{s+} \sma S^{\lambda(s-1)}
\]
and let
\[
X_{\{i\}} = \bigvee_{s \geq 1} S^1/C_{s+} \sma S^{\lambda(\lfloor \frac{s-1}{a_{i}} \rfloor)},
\]
so that we have  a homotopy cofiber sequence
\[
\xymatrix{X_\varnothing \ar[r] & X_{\{i\}} \ar[r] & \widetilde{B}^{cy}(\Pi_{a_{i}})   }.
\]

We can now generalize this approach to studying the multi-relative topological cyclic homology of truncated polynomials in multiple variables. The ring $A$ is a pointed monoid algebra: 
\[
k[x_1,\ldots,x_n]/(x_1^{a_n},\ldots,x_n^{a_n})\cong k(\Pi_{a_1} \sma \ldots \sma \Pi_{a_n})
\] 
where $\Pi_i$ is defined as above. Thus we can write
\[ 
\THH(A) \simeq \THH(k) \sma B^{cy}(\Pi_{a_1} \sma \ldots \sma \Pi_{a_n}),
\] 
and the multi-relative THH-spectrum, $\widetilde{\THH}(A)$ then splits as
\[ 
\widetilde{\THH}(A) \simeq \THH(k) \sma \widetilde{B}^{cy}(\Pi_{a_1} \sma \ldots \sma \Pi_{a_n}).
\]
Here $\widetilde{B}^{cy}(\Pi_{a_1} \sma \ldots \sma \Pi_{a_n})$ denotes the piece of the cyclic bar construction where the total exponent of all of the  $x_i$ combined is required to be positive.  We can then use the cofiber description of $\widetilde{B}^{cy}(\Pi_{a_{i}})$ to get a description of the equivariant homotopy type of this multi-relative one.

Given $I \subset \{1,\ldots,n\}$, let
\[
X_{I}=\bigwedge_{i=1}^{n}X_{I\cap \{i\}}.
\]

Then smashing together all of the cofiber sequences used to describe $B^{cy}(\Pi_{a_{i}})$ gives us an $n$-cube $\cX=\{X_{I}\}_{I\subset \{1,\dots,n\}}$, the maps of which are the obvious maps from the Hesselholt-Madsen $1$-variable case. The following is immediate from the defining cofiber sequences.

\begin{proposition}
The space $\widetilde{B}^{cy}(\Pi_{a_1} \sma \ldots \sma \Pi_{a_n})$ is the total homotopy cofiber of the $n$-cube $\cX$.
\end{proposition}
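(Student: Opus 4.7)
The plan is to identify $\widetilde{B}^{cy}(\Pi_{a_1} \sma \ldots \sma \Pi_{a_n})$ with an external smash product of the single-variable spaces $\widetilde{B}^{cy}(\Pi_{a_i})$, and then to invoke the fact that smashing together $n$ cofiber sequences computes the total homotopy cofiber of the resulting $n$-cube.

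First, I would use the standard multiplicativity of the cyclic bar construction: for pointed monoids $\Pi$ and $\Pi'$ there is a natural $S^1$-equivariant isomorphism $B^{cy}(\Pi \sma \Pi') \cong B^{cy}(\Pi) \sma B^{cy}(\Pi')$, coming simplicially from the identification $(\Pi \sma \Pi')^{\sma (q+1)} \cong \Pi^{\sma (q+1)} \sma (\Pi')^{\sma (q+1)}$. Iterating this $n-1$ times yields $B^{cy}(\Pi_{a_1} \sma \ldots \sma \Pi_{a_n}) \cong \bigwedge_{i=1}^n B^{cy}(\Pi_{a_i})$ as $S^1$-spaces.

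Next, each $B^{cy}(\Pi_{a_i})$ splits as $S^0 \vee \widetilde{B}^{cy}(\Pi_{a_i})$, where the $S^0$ summand is the zero-exponent sector made up of simplices of the form $(1,1,\ldots,1)$. Distributing smash over wedge expresses $\bigwedge_{i=1}^n B^{cy}(\Pi_{a_i})$ as a wedge, indexed by subsets $J \subseteq \{1,\ldots,n\}$, of $\bigwedge_{i \in J} \widetilde{B}^{cy}(\Pi_{a_i})$. The summand at $J = \{1,\ldots,n\}$ is exactly the part of $B^{cy}(\Pi_{a_1} \sma \ldots \sma \Pi_{a_n})$ on which each $x_i$ appears with positive total exponent, which by definition is $\widetilde{B}^{cy}(\Pi_{a_1} \sma \ldots \sma \Pi_{a_n})$. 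Thus $\widetilde{B}^{cy}(\Pi_{a_1} \sma \ldots \sma \Pi_{a_n}) \simeq \bigwedge_{i=1}^n \widetilde{B}^{cy}(\Pi_{a_i})$.

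Finally, for each $i$ the Hesselholt--Madsen presentation recalled in Equation~\eqref{eq:mapofcyclospaces} exhibits $\widetilde{B}^{cy}(\Pi_{a_i})$ as the homotopy cofiber of the $S^1$-equivariant map $X_\varnothing \to X_{\{i\}}$. Since smashing with a well-pointed $S^1$-space preserves homotopy cofibers, smashing these $n$ cofiber sequences together presents $\bigwedge_{i=1}^n \widetilde{B}^{cy}(\Pi_{a_i})$ as the total homotopy cofiber of the $n$-cube whose vertex at $I \subseteq \{1,\ldots,n\}$ is $\bigwedge_{i=1}^n X_{I \cap \{i\}} = X_I$, with edges induced slot-by-slot from $X_\varnothing \to X_{\{i\}}$. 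This is precisely the cube $\cX$. The only real subtlety lies in the second step: one needs to verify that the wedge decomposition of each $B^{cy}(\Pi_{a_i})$ by total $x_i$-degree is compatible, under the K\"unneth-style isomorphism, with the requirement that every $x_i$ have positive total exponent. Once this bookkeeping is in place, the remaining steps are formal consequences of the fact that smash preserves homotopy cofibers and of the single-variable Hesselholt--Madsen identification.
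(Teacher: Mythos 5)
Your argument is correct and is essentially the paper's own: the paper treats the statement as immediate from smashing together the $n$ single-variable Hesselholt--Madsen cofiber sequences, and you simply spell out the details (multiplicativity of $B^{cy}$, the wedge splitting $B^{cy}(\Pi_{a_i}) \simeq S^0 \vee \widetilde{B}^{cy}(\Pi_{a_i})$, and the fact that the total cofiber of the smash-product cube is the smash of the cofibers). No gaps; your identification of the top wedge summand with $\widetilde{B}^{cy}(\Pi_{a_1} \sma \ldots \sma \Pi_{a_n})$ matches the intended definition of the multi-relative piece.
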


For computations, it is helpful to provide another description of $X_{I}$. Let $\chi_{I}$ be the characteristic function of $I$, evaluating to $1$ if the argument is in $I$ and $0$ otherwise. Then let
\begin{equation} \label{eq:lambdaI} 
\lambda_I(\vect{s}) = \bigoplus_{i=1}^n \lambda\left(\left\lfloor \frac{s_i-1}{a_i^{\chi_{I}(i)}} \right\rfloor\right)
\end{equation}
with $\lambda(s)=\bC(1) \oplus \ldots \oplus \bC(s)$ as before. Then distributing the smash products over the wedges gives an equivalence of cyclotomic spectra:
\begin{equation} \label{eq:XI} 
 X_I \simeq \bigvee_{s_1,\ldots,s_n \geq 1} S^1/C_{s_1+} \sma \ldots \sma S^1/C_{s_n+}\sma S^{\lambda_I(\vect{s})}.
\end{equation}
Smashing the $n$-cube with $T = \THH(k)$ we find that $\widetilde{\THH}(A)$ is the iterated homotopy cofiber of the cube $T \sma \cX=\{T \sma X_I\}_{I \subset \{1,\ldots,n\}}$. Checking the condition of Definition \ref{CycloSpec} directly, the smash product of a cyclotomic spectrum with a cyclotomic space is again cyclotomic. It follows from Example \ref{ex:cyclospaceconcrete} and Lemma \ref{le:mapofcyclospaces} that the spectra $T \sma X_I$ and maps in the cube $T \sma \cX$ are cyclotomic. Proposition~\ref{homotopyfiber} then gives the following proposition.
\begin{proposition}\label{ref:IdentificationofTC}
The multi-relative topological cyclic homology is given as an iterated homotopy cofiber
\[
\widetilde{\TC}(A) = \hocofib\big(\{\TC(T \sma X_I)\}_{I \subset \{1,\ldots,n\}}\big).
\]
\end{proposition}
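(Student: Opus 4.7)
The strategy is to apply Proposition~\ref{homotopyfiber} iteratively to the $n$-cube $T \sma \cX$. The first step is to verify that $T \sma \cX$ really is a cube of cyclotomic spectra with cyclotomic edge maps. Each $X_I$, in the form of Equation~\eqref{eq:XI}, is a wedge of spaces $S^1/C_{s_1+} \sma \cdots \sma S^1/C_{s_n+} \sma S^{\lambda_I(\vect{s})}$; Example~\ref{ex:cyclospaceconcrete} displays it as a cyclotomic space. A direct check against Definition~\ref{CycloSpec} then shows that the smash product of the cyclotomic spectrum $T = \THH(k)$ with a cyclotomic space is again cyclotomic, so each $T \sma X_I$ is cyclotomic. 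The edge maps are obtained by smashing $T$ with the cyclotomic maps of Lemma~\ref{le:mapofcyclospaces}, and so they too are cyclotomic.

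The second step is to commute $\TC$ past the total homotopy cofiber. Since $\widetilde{\TC}(A) = \TC\big(\widetilde{\THH}(A)\big)$ by the definition of the multi-relative spectra, and by the preceding proposition $\widetilde{\THH}(A)$ is the total homotopy cofiber of $T \sma \cX$, it suffices to show that $\TC$ commutes with the iterated cofiber of this cube. I would do this by induction on $n$: the total cofiber of the $n$-cube is the homotopy cofiber of a cyclotomic map between the total cofibers of the two $(n-1)$-dimensional subcubes obtained by fixing the last coordinate to $\varnothing$ or $\{n\}$. Proposition~\ref{homotopyfiber} commutes $\TC$ with homotopy fibers of cyclotomic maps, hence—working in the stable category of spectra, where fibers and cofibers differ only by a shift—with homotopy cofibers of cyclotomic maps as well; applying it at each stage pushes $\TC$ through the iterated construction.

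The only real thing to watch is that at each stage of the induction, the intermediate spectra and their connecting maps remain cyclotomic so that Proposition~\ref{homotopyfiber} continues to apply. This is guaranteed by part~(1) of that proposition, which shows that cyclotomic spectra are closed under homotopy (co)fibers of cyclotomic maps. I expect this bookkeeping to be the main—but essentially formal—obstacle; all the substance has been packaged into Proposition~\ref{homotopyfiber}.
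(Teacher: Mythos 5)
Your proposal is correct and follows essentially the same route as the paper: verify via Example~\ref{ex:cyclospaceconcrete}, Lemma~\ref{le:mapofcyclospaces}, and a direct check of Definition~\ref{CycloSpec} that $T \sma \cX$ is a cube of cyclotomic spectra and cyclotomic maps, then commute $\TC$ past the iterated (co)fiber using Proposition~\ref{homotopyfiber}. The inductive bookkeeping you spell out is exactly what the paper leaves implicit in ``Proposition~\ref{homotopyfiber} then gives the following proposition.''
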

Applying homotopy groups then gives us a Mayer-Vietoris spectral sequence, and this is the spectral sequence in Theorem~\ref{t:main2}.
\begin{cor}\label{cor:SS}
There is a spectral sequence with $E_{1}$-page given by 
\[
E_{1}^{s,t}=\bigoplus_{|I|=s}\TC_{t}(T\sma X_{I})
\]
and converging to $\TC_{t-s}(A)$. This spectral sequence collapses at $E_{n}$.
\end{cor}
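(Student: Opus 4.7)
The plan is to apply the standard spectral sequence of an $n$-cube of spectra to the cube $\{\TC(T \sma X_I)\}_{I \subset \{1, \ldots, n\}}$ of Proposition~\ref{ref:IdentificationofTC}; by that proposition, its iterated homotopy cofiber is $\widetilde{\TC}(A)$.

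The first step is to filter the total homotopy cofiber by the cardinality of the indexing subsets: setting $F_{-1} = \ast$ and letting $F_s$ denote the total homotopy cofiber of the sub-cube supported on subsets of cardinality at most $s$ gives a tower $F_{-1} \to F_0 \to \cdots \to F_n = \widetilde{\TC}(A)$. A standard cofiber calculation identifies each successive quotient $F_s / F_{s-1}$ with $\bigvee_{|I|=s}\TC(T \sma X_I)$ (up to a suspension absorbed by the indexing convention). The associated exact couple then produces a spectral sequence with $E_1$-page
\[
E_1^{s,t} = \bigoplus_{|I|=s} \TC_t(T \sma X_I),
\]
whose $d_1$-differential is the alternating sum of the cube maps, converging to $\widetilde{\TC}_{t-s}(A)$.

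For the collapse, the observation is purely combinatorial: since $|I|$ takes values only in $\{0, 1, \ldots, n\}$, the $E_1$-page (and hence every subsequent $E_r$-page) is supported in the strip $0 \leq s \leq n$, while the differentials $d_r$ shift the column degree $s$ by $r$. Once $r$ is large enough that the target of every nonzero differential lies outside this strip, all higher differentials vanish automatically, yielding collapse at the $E_n$-page.

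I do not anticipate any serious obstacle. The only real care required is matching conventions so that the $E_1$-page agrees with the stated indexing, and verifying the identification of the filtration quotients in the cubical setup, both of which are routine.
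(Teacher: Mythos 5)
Your construction of the spectral sequence is essentially the paper's: Proposition~\ref{ref:IdentificationofTC} exhibits $\widetilde{\TC}(A)$ as the iterated homotopy cofiber of the cube $\{\TC(T\sma X_I)\}_{I\subset\{1,\ldots,n\}}$, and Corollary~\ref{cor:SS} is just the Mayer--Vietoris (cardinality-filtration) spectral sequence of that cube, with $E_1^{s,t}=\bigoplus_{|I|=s}\TC_t(T\sma X_I)$ and $d_1$ the alternating sum of the cube maps. Your filtration $F_{-1}\to F_0\to\cdots\to F_n$, the identification of the filtration quotients with wedges, and the convergence statement all match what the paper intends (modulo the usual care with suspensions, which you acknowledge).

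The gap is in the collapse claim, where your argument is off by one. The $E_1$-page occupies the $n+1$ columns $0\le s\le n$, and $d_r$ raises the column degree by $r$; so the purely positional argument only forces $d_r=0$ for $r\ge n+1$, i.e.\ it yields collapse at $E_{n+1}$, not at $E_n$. The differential $d_n\colon E_n^{0,\ast}\to E_n^{n,\ast}$ runs from the first column to the last and is not excluded by the support of the page. The case $n=1$ shows this is not a quibble: there the two-column spectral sequence has a nontrivial $d_1$ (the Verschiebung $V_a$) and, as the introduction recalls, collapses only at $E_2$. Hence collapse at $E_n$, as literally stated, is strictly stronger than what your strip argument gives; proving it would require information about the actual maps in the cube (in the spirit of the analysis of the $d_1$-differentials in Theorem~\ref{t:main3}), not just the width of the cube. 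Note that the paper itself asserts the $E_n$ degeneration here without argument, so if only the automatic positional degeneration is intended, the page your argument legitimately produces is $E_{n+1}$.
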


Each $\TC(T \sma X_I)$ for  $I \subset \{1, \ldots n\}$ will be computed in two stages: taking the homotopy limit of $\THH(T \sma X_I)^{C_n}$ over the Frobenius maps to get the spectrum $\TF(T \sma X_I)$, and then taking  the homotopy equalizer of the identity and the restriction maps on $\TF(T \sma X_I)$.  To find $\THH(T \sma X_I)^{C_n}$, it will be convenient to write the structure of $X_I$ as an $S^1$-space in a different way.

Let $S^1(s)$ denote $S^1$ with an accelerated $S^1$-action. This is $S^1$-equivariantly homeomorphic to $S^1/C_s$. We take a matrix $A=[a_{ij}] \in M_n(\bZ)$ to represent the map $(S^1)^n \to (S^1)^n$ given by
\[ (z_1,\ldots,z_n) = (z_1^{a_{11}} \cdots z_n^{a_{1n}},\ldots,z_1^{a_{n1}} \cdots z_n^{a_{nn}}).\]
If $A$ is invertible then this map is a homeomorphism, with inverse represented by $A^{-1}$.

\begin{lemma}\label{lem:Euclid}
Given integers $s_{1}$, $s_{2}$, and $a$, let $g=\gcd(s_{1},s_{2})$. Write $a=de$ with $eg=\gcd(s_{1},as_{2})$. The Euclidean algorithm shows that there are matrices
\[
A, B, B', C\in GL_{2}(\mathbb Z)
\]
such that
\[
A\begin{bmatrix} s_{1} \\ s_{2} \end{bmatrix} =  \begin{bmatrix} g \\ 0 \end{bmatrix},\quad
B\begin{bmatrix} s_{1} \\ es_{2} \end{bmatrix} = B' \begin{bmatrix} s_{1} \\ es_{2} \end{bmatrix} = \begin{bmatrix} eg \\ 0 \end{bmatrix},\quad
C\begin{bmatrix} s_{1} \\ as_{2} \end{bmatrix} =  \begin{bmatrix} eg \\ 0 \end{bmatrix},\quad
\]

\[
\begin{bmatrix} e & 0 \\ 0 & 1 \end{bmatrix} A= B \begin{bmatrix} 1 & 0 \\ 0 & e \end{bmatrix},
\]
and
\[
\begin{bmatrix} 1 & 0 \\ 0 & d \end{bmatrix} B' = C\begin{bmatrix} 1 & 0 \\ 0 & d \end{bmatrix}.
\]

Hence the diagram of $S^{1}$-equivariant maps
\[
\xymatrix{
{S^{1}(s_{1})\times S^{1}(s_{2})} \ar[d]_{\left[\begin{smallmatrix} 1 & 0 \\ 0 & a \end{smallmatrix}\right]} \ar[r]^{A}_{\cong} & {S^{1}(g)\times S^{1}(0)}\ar[r]^{\left[\begin{smallmatrix} e & 0 \\ 0 & 1 \end{smallmatrix}\right]} & {S^{1}(eg)\times S^{1}(0)} \ar[d]^{B'B^{-1}}_{\cong} \\
{S^{1}(s_{1})\times S^{1}(as_{2})} \ar[r]_{C}^{\cong} & {S^{1}(eg)\times S^{1}(0)} & {S^{1}(eg)\times S^{1}(0)}\ar[l]^{\left[\begin{smallmatrix} 1 & 0 \\ 0 & d \end{smallmatrix}\right]}}
\]
commutes.
\end{lemma}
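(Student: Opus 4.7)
The plan is to construct the four matrices explicitly via Bezout's identity, verify the two matrix identities by direct computation, and then deduce the diagram commutativity formally from these identities.

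First I would use Bezout to find $\alpha, \beta$ with $\alpha s_1 + \beta s_2 = g$, and set
\[
A = \begin{bmatrix} \alpha & \beta \\ -s_{2}/g & s_{1}/g \end{bmatrix}.
\]
This has determinant $1$ and carries $\begin{bmatrix} s_1 \\ s_2 \end{bmatrix}$ to $\begin{bmatrix} g \\ 0 \end{bmatrix}$. The matrix $B$ is then essentially forced: I would define $B := \begin{bmatrix} e & 0 \\ 0 & 1 \end{bmatrix} A \begin{bmatrix} 1 & 0 \\ 0 & 1/e \end{bmatrix}$, so the identity $\begin{bmatrix} e & 0 \\ 0 & 1 \end{bmatrix} A = B \begin{bmatrix} 1 & 0 \\ 0 & e \end{bmatrix}$ holds by construction. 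The key observation making $B$ integral is that the hypothesis $eg = \gcd(s_{1}, as_{2})$ implies $eg \mid s_{1}$, hence $e \mid s_{1}/g$, so the second column of $\begin{bmatrix} e & 0 \\ 0 & 1 \end{bmatrix} A$ is divisible by $e$.

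For the second pair, I would apply Bezout to find $\alpha', \beta''$ with $\alpha' s_{1} + a s_{2} \beta'' = eg$ (available since $\gcd(s_{1}, as_{2}) = eg$), set $\beta' = d\beta''$, and take
\[
B' = \begin{bmatrix} \alpha' & d\beta'' \\ -s_{2}/g & s_{1}/(eg) \end{bmatrix}.
\]
Since the $(1,2)$-entry of $B'$ is divisible by $d$ by construction, the matrix $C := \begin{bmatrix} 1 & 0 \\ 0 & d \end{bmatrix} B' \begin{bmatrix} 1 & 0 \\ 0 & 1/d \end{bmatrix}$ is integral and automatically satisfies $\begin{bmatrix} 1 & 0 \\ 0 & d \end{bmatrix} B' = C \begin{bmatrix} 1 & 0 \\ 0 & d \end{bmatrix}$. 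Along the way I would verify $\gcd(s_{1}, es_{2}) = eg$ (which follows from $eg \mid s_{1}$, $eg \mid es_{2}$, and the fact that $\gcd(s_{1}, es_{2})$ divides $\gcd(s_{1}, des_{2}) = eg$), so that $B$ and $B'$ genuinely send $\begin{bmatrix} s_{1} \\ es_{2} \end{bmatrix}$ to $\begin{bmatrix} eg \\ 0 \end{bmatrix}$, and that all four matrices have determinant $\pm 1$.

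Finally, the diagram commutativity follows formally. The two composites from the top-left vertex to the middle-bottom vertex are $C \cdot \begin{bmatrix} 1 & 0 \\ 0 & a \end{bmatrix}$ and $\begin{bmatrix} 1 & 0 \\ 0 & d \end{bmatrix} (B' B^{-1}) \begin{bmatrix} e & 0 \\ 0 & 1 \end{bmatrix} A$. Using the first matrix identity to rewrite $B^{-1}\begin{bmatrix} e & 0 \\ 0 & 1 \end{bmatrix} A = \begin{bmatrix} 1 & 0 \\ 0 & e \end{bmatrix}$ and the second to slide $\begin{bmatrix} 1 & 0 \\ 0 & d \end{bmatrix}$ past $B'$, the second expression collapses to $C \begin{bmatrix} 1 & 0 \\ 0 & d \end{bmatrix} \begin{bmatrix} 1 & 0 \\ 0 & e \end{bmatrix} = C \begin{bmatrix} 1 & 0 \\ 0 & a \end{bmatrix}$, matching the first. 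The main obstacle is the divisibility bookkeeping needed to guarantee that $B$ and $C$ are integral; once the rows and columns have been arranged to absorb the factors of $e$ and $d$ in the right places, the remaining verifications are purely algebraic.
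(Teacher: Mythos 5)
Your proposal is correct and takes essentially the same route as the paper's (one-sentence) proof: construct an explicit Bezout/Euclidean matrix for one vertex and obtain its partner by conjugating with the diagonal matrices $\left[\begin{smallmatrix} e & 0 \\ 0 & 1 \end{smallmatrix}\right]$, $\left[\begin{smallmatrix} 1 & 0 \\ 0 & e \end{smallmatrix}\right]$, $\left[\begin{smallmatrix} 1 & 0 \\ 0 & d \end{smallmatrix}\right]$, with integrality supplied by $eg=\gcd(s_{1},as_{2})\mid s_{1}$, after which the commutativity of the diagram follows formally from the two matrix identities. The only cosmetic difference is that you build $C$ from $B'$ whereas the paper builds $B'$ from $C$ (both directions work, since the second row of any such $C$ is $\pm(-ds_{2}/g,\ s_{1}/(eg))$, which is already divisible by $d$ where needed).
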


The proof is straightforward, with the algorithm for constructing $B$ based on $A$ and the algorithm for constructing $B'$ based on $C$.

The map $A$ in Lemma~\ref{lem:Euclid} provides $S^{1}$-equivariant homeomorphisms
\[
S^{1}(s_{1})\times S^{1}(s_{2})\cong S^{1}(\gcd(s_{1},s_{2}))\times S^{1}(0),
\]
and Lemma~\ref{lem:Euclid} is saying additionally that we can compatibly choose these homeomorphisms to understand the effect of the $a$\textsuperscript{th} power map on one of the factors.

\begin{remark}\label{rem:HighDimTorus}
By fixing a homeomorphism
\[
S^1(s_1) \times \ldots \times S^1(s_{n-1}) \to S^1(\gcd(s_1,\ldots,s_{n-1})) \times \bT^{n-2}
\]
we can reduce the case $n > 2$ to the case $n = 2$. In particular the multiplication by $d$ map in Lemma \ref{lem:Euclid} is concentrated in a single torus factor.
\end{remark}

This implies that we can write the $X_I$ defined in Equation \ref{eq:XI} as
\[
X_I \cong \bigvee_{\vect{s}\in\bN^n} \bT^{n-1}_+ \sma S^1/C_{\gcd(\vect{s})+} \sma S^{\lambda_I(\vect{s})}.
\]
Now we define
\begin{equation} \label{eq:XhatI} 
\widehat{X}_I = \bigvee_{\vect{s}\in\bN^n} S^1/C_{\gcd(\vect{s})_+} \sma S^{\lambda_I(\vect{s})},
\end{equation}
i.e., $X_I$ without the $(n-1)$-torus.

To understand $\TC(T \sma \widehat{X}_I)$ we use the following result (see Equations \ref{eq:XhatI} and \ref{eq:lambdaI} for the notation):

\begin{lemma}\label{l:TF}
Up to profinite completion we have an equivalence
\[ 
\TF(T \sma \widehat{X}_I) \simeq \bigvee_{\vect{s}\in\bN^n} \Sigma [T \sma S^{\lambda_I(\vect{s})}]^{C_{\gcd(\vect{s})}}.
\]
The restriction maps $R_d$ mapping the wedge summand  corresponding to $d\cdot \vect{s}$ to that of $\vect{s}$ is the smash product of restriction map $R_d$ of $T$ and of the homeomorphism 
\[
r_{d}\colon (S^{\lambda_I(d\vect{s})})^{C_{d}}\to S^{\lambda_I(\vect{s})}.
\]
\end{lemma}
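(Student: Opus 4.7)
The proof reduces the computation to the classical one-variable Hesselholt--Madsen setup, applied to each wedge summand in a decomposition of $\widehat{X}_I$.

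First, distributing the smash product over the wedge defining $\widehat{X}_I$ in Equation~\eqref{eq:XhatI} gives
\[
T \sma \widehat{X}_I \;=\; \bigvee_{\vect{s} \in \bN^n} T \sma S^1/C_{\gcd(\vect{s})+} \sma S^{\lambda_I(\vect{s})}.
\]
The Frobenius maps $F_d\colon E^{C_{md}}\to E^{C_m}$ are inclusions of fixed-point spectra, and they respect this wedge decomposition because each $\vect{s}$-summand is $S^1$-invariant. Since the connectivity of $S^{\lambda_I(\vect{s})}$ grows without bound as the coordinates of $\vect{s}$ grow, after profinite completion the functor $\TF = \holim_F$ commutes with this infinite wedge, and the computation reduces to a single summand.

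Now fix $\vect{s}$ and write $g = \gcd(\vect{s})$, $V = \lambda_I(\vect{s})$. The plan is to apply the classical one-variable computation from \cite{HeMa97b}: for any $S^1$-spectrum $E$, there is a natural equivalence $\TF(E \sma S^1/C_{g+}) \simeq \Sigma E^{C_g}$ up to profinite completion. The key observation is that $C_n$ acts trivially on $S^1/C_g$ when $n \mid g$, giving $(E \sma S^1/C_{g+})^{C_n} \simeq E^{C_n} \sma S^1/C_{g+}$, while the contributions for $n \nmid g$ collapse in the profinite Frobenius limit. Taking $\holim_F$ leaves only the full $C_g$-fixed-point term, with the suspension arising from the $1$-dimensionality of the circle factor $S^1/C_g$. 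Applied with $E = T \sma S^V$, this produces the desired summand $\Sigma [T \sma S^{\lambda_I(\vect{s})}]^{C_{\gcd(\vect{s})}}$.

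Finally, the cyclotomic structure on $T \sma \widehat{X}_I$ is the smash product of the cyclotomic structure on $T = \THH(k)$ and the cyclotomic space structure on $\widehat{X}_I$ supplied by Example~\ref{ex:cyclospaceconcrete}. Since the restriction maps $R_d$ in $\TF$ are built from this cyclotomic data, they decompose on each summand as the smash product of the restriction $R_d$ on $T$ with the homeomorphism $r_d\colon (S^{\lambda_I(d\vect{s})})^{C_d} \xrightarrow{\cong} S^{\lambda_I(\vect{s})}$ coming from the cyclotomic space structure. The main obstacle is the $\TF$-equivalence for a single wedge summand, which is precisely the one-variable Hesselholt--Madsen theorem and applies verbatim in our setting; the rest of the argument is bookkeeping about wedge decompositions and cyclotomic structure maps.
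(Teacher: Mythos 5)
Your reduction to a single wedge summand (the Frobenii preserve the $\vect{s}$-summands, and the connectivity of $S^{\lambda_I(\vect{s})}$ grows, so the homotopy limit commutes with the wedge in each degree) is fine, and the statement you want for a single summand --- that $\TF\big(E \sma S^1/C_{g+}\big) \simeq \Sigma E^{C_g}$ after profinite completion, with $E = T \sma S^{\lambda_I(\vect{s})}$ and $g=\gcd(\vect{s})$ --- is correct. But the mechanism you offer for it would fail, and this is the crux of the lemma. In $\holim_F$ over the divisibility poset the subgroups $C_n$ with $g \mid n$ are cofinal, and all of these except $n=g$ satisfy $n \nmid g$; so the terms you propose to discard are exactly the ones that control the limit, while the terms with $n \mid g$ form a bounded, non-cofinal subdiagram. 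Moreover, even that subdiagram has homotopy limit $(E \sma S^1/C_{g+})^{C_g} \simeq E^{C_g} \sma S^1/C_{g+} \simeq E^{C_g} \vee \Sigma E^{C_g}$, because your (correct) observation that fixed points pass through the trivially-acted-on circle also gives the stable splitting $S^1/C_{g+} \simeq S^0 \vee S^1$. So the suspension does not ``arise from the $1$-dimensionality of the circle factor'' in the way you claim: your argument leaves an extra unsuspended copy of $E^{C_g}$, and the way that copy gets cancelled is precisely through the fixed points for the larger subgroups $C_n$, $g\mid n$, $n\neq g$ (via transfer/norm sequences) --- the contributions you dropped. There is also no statement in \cite{HeMa97b} of the form ``for any $S^1$-spectrum $E$, $\TF(E\sma S^1/C_{g+})\simeq \Sigma E^{C_g}$'' that applies verbatim, so as written the key step is both unproved and mis-justified.

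The paper avoids analyzing the Frobenius limit term by term. By \cite[Lemma 8.2]{HeMa97}, the map from the genuine $S^1$-fixed points to $\holim_F$ of the $C_n$-fixed points is an equivalence after profinite completion, so one may replace $\TF(T\sma\widehat{X}_I)$ by $S^1$-fixed points computed one wedge summand at a time. Then the Wirthm\"uller isomorphism identifies $T \sma S^{\lambda_I(\vect{s})} \sma S^1/C_{\gcd(\vect{s})+}$ with $\Sigma F\big(S^1/C_{\gcd(\vect{s})+},\, T \sma S^{\lambda_I(\vect{s})}\big)$, a suspended coinduced spectrum, whose $S^1$-fixed points are $\Sigma\big[T \sma S^{\lambda_I(\vect{s})}\big]^{C_{\gcd(\vect{s})}}$; the suspension is the Wirthm\"uller dimension shift by the $1$-dimensional tangent representation of $S^1/C_{\gcd(\vect{s})}$, not a stable splitting of the circle. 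If you want to salvage your outline, replace your ``key observation'' by this fixed-point/Wirthm\"uller argument (or an explicit citation of it); your treatment of the wedge decomposition and your identification of the restriction maps $R_d$ as $R_d$ on $T$ smashed with the cyclotomic homeomorphism $r_d$ are fine and agree with the paper.
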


\begin{proof}
%
%
This is by  \cite[Lemma 8.2]{HeMa97}, which says that the inclusion of the $S^1$-fixedpoints of an $S^1$-spectrum into the homotopy inverse limit with respect to inclusions of the $C_n$ fixedpoints over all $n\in\bN$ becomes an equivalence after profinite completion.  In our case $S^1$ acts on each wedge summand corresponding to $\vect{s}\in\bN^n$ separately, so
$\TF(T \sma \widehat{X}_I )$ splits as a wedge over $\vect{s}$.  For each 
$\vect{s}$, the Wirthm\"ulcer isomorphism gives an equivalence
\[
\Sigma[F(S^1/{C_{\gcd(\vect{s})}}_+, T \sma S^{\lambda_I(\vect{s})})]\simeq
S^1/C_{\gcd(\vect{s})_+} \sma T \sma S^{\lambda_I(\vect{s})}
\]
(with trivial $S^1$-action on the suspension of the function space), meaning that the $S^1$-fixedpoints of the spectrum on the right are the same as those of the spectrum on the left. Since the function spectrum is conduced, the fixed points are exactly $\Sigma [T \sma S^{\lambda_I(\vect{s})}]^{C_{\gcd(\vect{s})}}$. 

The restriction maps $R_d$ are induced by those of the original cyclotomic spectrum $T \sma \widehat{X}_I$.
\end{proof}

It follows that up to profinite completion,
\[ \TC(T \sma \widehat{X}_I) = \bigvee_{\gcd(\vect{s})=1} \holim_R \Sigma [T \sma S^{\lambda_I(d\vect{s})}]^{C_d}.\]

\noindent Observe that the indexing set for this wedge is the same as the indexing set that identifies $\bN^{n}$ as a disjoint union of copies of $\bN$. This plays an essential role in our computation. 

Let  $\TF(T\sma \widehat{X}_{I};\vect{s})$ (for $\vect{s}\in\bN^n$)  and $\TC(T\sma \widehat{X}_{I};\vect{s})$ (when $\gcd(\vect{s})=1$) denote the summands corresponding to $\vect{s}$ in the respective wedge sums. 

\section{Calculations for perfect fields} \label{s:perfectfields}
For any ring $R$, there is a close connection between $\THH(R)$ and Witt vectors. In fact, we have an isomorphism
\[
\pi_0 \THH(R)^{C_m} \cong \bW_{\langle m \rangle}(R), 
\]
where the Witt vectors defined by the truncation set $\langle m \rangle$ from Equation \ref{eq:anglem} \cite[Addendum 3.3]{HeMa97}.

Let $k$ be a perfect field of characteristic $p > 0$. Recall that in this case, by \cite{HeMa97},
\[
\THH_*(k) \cong k[\mu_0] 
\]
with $|\mu_0|=2$, and 
\[ 
\pi_*( \THH(k)^{C_{p^{m-1}}}) = \TR^m_*(k; p) \cong \bW_m(k; p)[\mu_0].
\]
It follows that
\[
\pi_*(\THH(k)^{C_m}) \cong \bW_{\langle m \rangle}(k)[\mu_0] 
\]
for all $m$.

From this, we can recover the big Witt vectors by taking the limit
\[
\lim_{R, m \leq n} \pi_*( \THH(k)^{C_m}) \cong \bW_n(k)[\mu_0].
\]

We can now prove the main computational result (recalling Definitions  \ref{eq:SqI} and \ref{eq:XhatI}). Suggestively mirroring the notation used for the wedge summands above, let 
\[
S_{q}(I;\vect{s})=S_{q}(I)\cap \langle\vect{s}\rangle.
\]

\begin{thm} \label{t:TFcalc}
Let $k$ be a perfect field of characteristic $p >0$. Then for any $I \subset \{1,\ldots,n \}$ and $\vect{s}\in\bN^n$, $\TF_*(T \sma \widehat{X}_I; \vect{s})$ is concentrated in odd degrees, and we have an isomorphism
\[
\TF_{2q-1}(T \sma \widehat{X}_I; \vect{s}) \cong \bW_{S_{q}(I;\vect{s})} (k).
\]
Under this identification, the restriction maps 
\[
(R_d)_*: \TF_{2q-1}(T \sma \widehat{X}_I; d\cdot \vect{s}) \to \TF_{2q-1}(T \sma \widehat{X}_I; \vect{s})
\] 
are the Witt vector restriction maps of Equation \ref{eq:restrict}.
\end{thm}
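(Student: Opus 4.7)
By Lemma~\ref{l:TF}, what we need is $\pi_{2q-2}([T \sma S^{\lambda_I(\vect{s})}]^{C_d})$, where $d = \gcd(\vect{s})$. The three key inputs are the identification $\pi_*(T^{C_m}) \cong \bW_{\langle m\rangle}(k)[\mu_0]$ concentrated in even non-negative degrees, the cyclotomic identification $T^{\Phi C_e} \simeq T$ for each $e \mid d$, and the isotropy decomposition of $\lambda_I(\vect{s}) = \bigoplus_i \lambda(m_i)$ with $m_i = \lfloor (s_i-1)/a_i^{\chi_I(i)} \rfloor$. The odd-degree vanishing is immediate: every layer of the isotropy filtration contributes a copy of the even-degree $\pi_*(T)$ shifted by an even amount, so $[T \sma S^{\lambda_I(\vect{s})}]^{C_d}$ has homotopy concentrated in even degrees, and the outer $\Sigma$ puts everything in odd degrees.

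To compute the odd-degree groups, I would analyze the $C_d$-fixed point spectrum via the filtration by isotropy subgroups $C_e \subseteq C_d$. For each $e \mid d$, the fixed subrepresentation $\lambda_I(\vect{s})^{C_e} = \bigoplus_i \lambda(\lfloor m_i/e\rfloor)$ has complex dimension
\[
N_e = \sum_i \lfloor m_i/e\rfloor = \sum_i \lfloor (s_i-1)/(e\,a_i^{\chi_I(i)})\rfloor,
\]
using the floor identity $\lfloor \lfloor x \rfloor/n\rfloor = \lfloor x/n\rfloor$ for integer $n$. Via $T^{\Phi C_e} \simeq T$, the level-$e$ layer contributes a copy of $\pi_*(T) = k[\mu_0]$ supported in even degrees $\geq 2N_e$. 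Extracting degree $2q-2$ therefore picks out exactly those $e \mid d$ with $N_e \leq q-1$, each contributing a single copy of $k$ (via an appropriate power of $\mu_0$).

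To match this decomposition with $\bW_{S_q(I;\vect{s})}(k)$, write $\vect{s} = d\vect{s}'$ with $\gcd(\vect{s}')=1$, so that divisors of $\vect{s}$ are exactly $(d/e)\vect{s}'$ for $e \mid d$. A short floor manipulation (entirely parallel to the one above) shows that $(d/e)\vect{s}' \in S_q(I)$ if and only if $N_e \leq q-1$, giving a bijection between the surviving indices and $S_q(I;\vect{s})$. The promotion of this additive identification to the ring isomorphism $\bW_{S_q(I;\vect{s})}(k)$ follows from the splitting $\bW_S(R) \cong \prod_{\gcd(\vect{s}')=1} \bW_{\bN\vect{s}' \cap S}(R)$ established in Section~\ref{s:ndimWitt}, reducing the assertion to the classical one-variable Witt identification carried out by Hesselholt--Madsen on each factor.

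Finally, compatibility of $R_d$ with the Witt vector restriction of \eqref{eq:restrict} is essentially built into Lemma~\ref{l:TF}: $R_d$ is the smash product of the cyclotomic restriction on $T$ (which realizes the classical Witt restriction on $\pi_*$) with the homeomorphism $r_d$ on representation spheres, and both act coordinate-wise under our identification. The main obstacle I anticipate is the last step: promoting the set-level bijection of indices to the full Witt ring isomorphism requires careful tracking of how equivariant Euler classes interact with the ghost components of $\pi_*(T^{C_d})$ through the isotropy filtration. This is essentially an extension of the one-variable analysis of Hesselholt--Madsen to the wedge summand of Lemma~\ref{l:TF} indexed by $\vect{s}$, but the combinatorial bookkeeping in the multi-variable setting requires care.
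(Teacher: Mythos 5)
Your reduction via Lemma~\ref{l:TF} to computing $\pi_{2q-2}\big([T \sma S^{\lambda_I(\vect{s})}]^{C_{\gcd(\vect{s})}}\big)$, and your floor-function bookkeeping identifying the divisors of $\vect{s}$ that survive in degree $2q-2$ with the elements of $S_q(I;\vect{s})$, both match the paper. But the heart of the theorem is missing: you assert that each surviving divisor $e \mid d$ contributes ``a single copy of $k$,'' so that your computation produces $\bigoplus_{e} k$, and you then hope to ``promote'' this to $\bW_{S_q(I;\vect{s})}(k)$. That promotion is not bookkeeping --- it is the whole point. For $k$ perfect of characteristic $p$ the group $\bW_S(k)$ is not a direct sum of copies of $k$ when $S$ contains $p$-power-related elements (e.g.\ $\bW_2(\bF_p;p)=\bZ/p^2$), so an argument that only produces one copy of $k$ per index has at best computed an associated graded and must still resolve the extensions in the $p$-typical direction. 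Moreover, the filtration you invoke does not have the layers you claim: the level-$e$ subquotient of $[T\sma S^{\lambda}]^{C_d}$ is a homotopy orbit spectrum of the form $(T \sma S^{\lambda^{C_e}})_{hC_{d/e}}$, not a copy of $T \sma S^{\lambda^{C_e}}$, so neither the even-degree concentration nor the ``one copy of $k[\mu_0]$ starting in degree $2N_e$'' is immediate; both are theorems, not observations. The splitting $\bW_S(R)\cong\prod_{\gcd=1}\bW_{\bN\vect{s}'\cap S}(R)$ that you cite cannot rescue this, since $S_q(I;\vect{s})$ already lies on a single ray and that splitting says nothing about the group extensions within a ray.

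The paper supplies exactly the two inputs your sketch lacks: first, the prime-to-$p$ splitting of fixed points $(T\sma S^{\lambda})^{C_{p^rd}} \simeq \prod_{e\mid d}(T\sma S^{\lambda^{C_{d/e}}})^{C_{p^r}}$ (via $R_{d/e}\circ F_e$, from the proof of Proposition 4.2.5 of \cite{HeMa97b}), which is what lets one separate the prime-to-$p$ divisors, where a genuine product splitting holds, from the $p$-typical tower, where it does not; and second, Proposition 9.1 of \cite{HeMa97}, which computes $\pi_{2q}\big((T\sma S^{\mu})^{C_{p^r}}\big)$ as an honest $p$-typical Witt group $\bW_s(k;p)$ of length determined by the dimensions of the fixed subrepresentations, with odd homotopy vanishing. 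Your proposal treats all divisors $e\mid d$ uniformly through one ``isotropy filtration,'' which conflates these two regimes, and your closing acknowledgement of the extension/Euler-class issue names the obstacle without overcoming it. To repair the argument you should either quote these two results as the paper does, or reprove the Hesselholt--Madsen $p$-typical computation; the remaining steps of your write-up (the divisor bijection, the identification of $R_d$ with the Witt restriction reduced to the one-variable case) are fine.
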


\begin{proof}

By Lemma \ref{l:TF}, up to profinite completion we have \[ \TF(T \sma \widehat{X}_I) = \bigvee_{\vect{s}\in\bN^n} \Sigma [T \sma S^{\lambda_I(\vect{s})}]^{C_{\gcd(\vect{s})}}.\]
It therefore suffices to compute
\[
\pi_{2q-2}\big((T \sma S^{\lambda_I(\vect{s})})^{C_{\gcd(\vect{s})}}\big).
\]
We consider the more general situation where $\lambda$ is any complex $S^1$-representation, and we will compute 
\[
\pi_n\big((T \sma S^{\lambda})^{C_{p^rd}}\big),
\]
for $(p,d)=1$. Since $k$ is a $\mathbb{Z}_{(p)}$-algebra, there is a splitting
\[
(T \sma S^{\lambda})^{C_{p^rd}} \xrightarrow{\simeq} \prod_{e|d} (T \sma S^{\lambda^{C_{d/e}}})^{C_{p^r}},
\]
where the map to the $e$\textsuperscript{th} factor is $R_{d/e} \circ F_e$ \cite[Proof of Proposition 4.2.5]{HeMa97b}.  By \cite[Proposition 9.1]{HeMa97}, the homotopy groups are concentrated in even degrees and
%
%
\[
\pi_{2q}\big((T \sma S^{\lambda^{C_{d/e}}})^{C_{p^r}}\big) \cong \begin{cases} \bW_s(k;p) & \!\! \dim_{\bC}(\lambda^{C_{(d/e)(p^{r-s+1})}}) \! \leq \! q \! < \!  \dim_{\bC}(\lambda^{C_{(d/e)(p^{r-s})}})  \\ \bW_{r+1}(k;p)  & \!\! q \geq \dim_{\bC}(\lambda^{C_{d/e}}) \end{cases}
\]

We recast this last isomorphism in a form more amenable to comparing with the Witt vectors on $\mathbb N^{n}$.
Recall that for any truncation set $S$, there is a splitting
\[
\mathbb{W}_S(k) \cong \prod_{p \nmid e} \bW_{p^{\mathbb{N}_0} \cap S/e}(k),
\]
where the projection onto the $e$\textsuperscript{th} factor is given by $R_{d/e} \circ F_e$. Hence $\mathbb{W}_S(k)$ splits as a product of $p$-typical Witt vectors.
Let $S_{q}(p^{r}d)$ be the truncation set of all divisors $m$ of $p^rd$ such that $\dim_{\bC}(\lambda^{C_{p^rd/m}}) \leq q$. If $s$ is the unique integer such that 
\[
\dim_{\bC}(\lambda^{C_{(d/e)(p^{r-s+1})}}) \leq q <  \dim_{\bC}(\lambda^{C_{(d/e)(p^{r-s})}}),
\] 
and $s=r+1$ if $q \geq \dim_{\bC}(\lambda^{C_{d/e}})$, then
\[
p^{\mathbb{N}_0} \cap S/e = \{ 1, p, p^2, \ldots, p^{s-1} \}.
\]
It follows that
\[
\pi_{2q}\big((T \sma S^{\lambda})^{C_{p^rd}}\big) \cong \mathbb{W}_{S_{q}(p^{r}d)}(k).
\]

We now specialize to the case we are interested in:
\[
\TF_{2q-1}(T \sma \widehat{X}_I; \vect{s}) = \pi_{2q-2}\big([T \sma S^{\lambda_I(\vect{s})}]^{C_{\gcd(\vect{s})}}\big).
\]
From the calculation above, 
\[
\TF_{2q-1}(T \sma \widehat{X}_I; \vect{s}) \cong  \mathbb{W}_{S_{q-1}(\gcd(\vect{s}))}(k).
\]
Associate to each $m |\gcd(\vect{s})$ a vector $\vect{u} \mid \vect{s}$ via 
\[
\vect{u}=\frac{m}{\gcd(\vect{s})} \vect{s}.
\]
Then $\dim_{\bC}(\lambda_{I}(\vect{s})^{C_{\gcd(\vect{s})/m}})$ is
\[ 
\sum_{i=1}^n \left\lfloor \left(\frac {\Big\lfloor \frac{s_i-1}{a_i^{\chi_I(i)}}\Big \rfloor}{\frac{\gcd(\vect{s})}{m}}\right)\right\rfloor
=\sum_{i=1}^n \left\lfloor \frac{s_i-1}{a_i^{\chi_I(i)}}\frac{m}{\gcd(\vect{s})}\right\rfloor
=\sum_{i=1}^n \left\lfloor \frac{u_i-\frac {m}{\gcd(\vect{s})}}{a_i^{\chi_I(i)}}\right\rfloor
=\sum_{i=1}^n \left\lfloor \frac{u_i-1}{a_i^{\chi_I(i)}}\right\rfloor
\]
because $0< \frac {m}{\gcd(\vect{s})}\leq 1$ and $u_i $ and $a_i^{\chi_I(i)}$ are integers.
Thus the truncation set $S$ is isomorphic to the truncation set
\[
S_{q}(I;\vect{s}).
\]
The result follows. 

The claim about the restriction maps follows from the one-dimensional case, since each  restriction map
\[
R_d: \TF(T \sma \widehat{X}_I; d\vect{s}) \to \TF(T \sma \widehat{X}_I; \vect{s})
\]
is happening in the one-dimensional truncation set $\bN\vect{s}$.
\end{proof}

This allows us to prove Theorem~\ref{t:main2}.

\begin{proof}[Proof of Theorem \ref{t:main2}]
By Theorem \ref{t:TFcalc}, if we take the limit 
\[
\holim_{R}\TF_{2q-1}(T \sma \widehat{X}_I; \vect{s})
\] 
over all $\vect{s} \in S_q(I)$, then we get $\bW_{S_q(I)}(k)$. There is an equivalence between the limit over $R$ of this system and the equalizer of $R$ and the identity function  over all $\vect{s}$, so the result follows. The calculation of the order when $k$ is finite follows from the formula
\[
|S_q(I)| = \binom{n+q-1}{n} \prod_{i \in I} a_i.
\]
\end{proof}

We have now computed the homotopy groups at each vertex of our cube. The maps in the $n$-cube are constructed from the $a_i$\textsuperscript{th} power maps on the various factors. In the $1$-variable case studied by Hesselholt and Madsen \cite{HeMa97b} that yielded the ordinary Verschiebung map $V_a$. In this case the situation is a little bit more complicated. On the cube $\TF(\THH(k) \sma \cX)$ we find the following.

\begin{lemma} \label{l:aasjsdfkl}
Let $k$ be any ring. For each $\vect{s}=(s_1,\ldots,s_n)$, let 
\[
\vect{u}=(s_1,\ldots,a_is_i,\ldots,s_n).
\] 
Let $e_i=\gcd(\vect{u})/\gcd(\vect{s})$ and let $\vect{t}=(s_1,\ldots,e_i s_i,\ldots,s_n)$. Up to profinite completion, and using the factorization in Lemma~\ref{lem:Euclid}, we can factor the $a_i$\textsuperscript{th} power map
\[ 
\TF(\THH(k) \sma X_{I-\{i\}};\vect{s}) \to \TF(\THH(k) \sma X_I; \vect{u}) 
\]
as the composite of
\[
\Sigma \big[ \THH(k) \sma S^\lambda \big]^{C_{\gcd(\vect{s})}} \sma \bT^{n-1}_+ \xrightarrow{V_{e_i} \sma 1} \Sigma \big[ \THH(k) \sma S^\lambda \big]^{C_{\gcd(\vect{t})}} \sma \bT^{n-1}_+,
\]
an equivalence on $\Sigma \big[ \THH(k) \sma S^\lambda \big]^{C_{\gcd{(\vect{t})}}} \sma \bT^{n-1}_+$, and
\[
\Sigma \big[ \THH(k) \sma S^\lambda \big]^{C_{\gcd(\vect{t})}} \sma \bT^{n-1}_+ \xrightarrow{1 \sma d_i} \Sigma \big[ \THH(k) \sma S^\lambda \big]^{C_{\gcd(\vect{u})}} \sma \bT^{n-1}_+
\]
Here $d_i$ denotes multiplication by $d_i$ on the last torus factor.
\end{lemma}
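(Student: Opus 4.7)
The plan is to reduce to the $2$-variable case via Remark~\ref{rem:HighDimTorus} and then directly apply Lemma~\ref{lem:Euclid} to obtain the factorization. The initial observation is that the representation sphere attached to each wedge summand is unaffected by the $a_{i}$-th power map: since $\lfloor (a_{i}s_{i} - 1)/a_{i} \rfloor = s_{i} - 1$, and since the coordinates with indices $j \neq i$ are unchanged in passing from $I - \{i\}$ to $I$, we have $\lambda_{I-\{i\}}(\vect{s}) = \lambda_{I}(\vect{u})$ as $S^{1}$-representations. Thus the map in question is the $a_{i}$-th power on $S^{1}/C_{s_{i}+}$ smashed with identities on $\THH(k)$, on $S^{\lambda}$, and on all the other $S^{1}/C_{s_{j}+}$.

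Next I would apply Remark~\ref{rem:HighDimTorus} to collect the non-$i$ factors into $S^{1}(\gcd_{j \neq i}(s_{j})) \times \bT^{n-2}$, so that the $a_{i}$-th power map is concentrated in the two $S^{1}$-factors $S^{1}(\gcd_{j \neq i}(s_{j})) \times S^{1}(s_{i})$, while the $\bT^{n-2}$ and the sphere $S^{\lambda}$ are carried along unchanged. Now Lemma~\ref{lem:Euclid} applies directly with $s_{1} = \gcd_{j\neq i}(s_{j})$, $s_{2} = s_{i}$, $a = a_{i}$; we have $g = \gcd(\vect{s})$, $eg = \gcd(\vect{u})$, $e = e_{i}$, and $d = d_{i}$. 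The commuting diagram of the lemma factors $[1, a_{i}] = C^{-1} \circ [1, d_{i}] \circ (B'B^{-1}) \circ [e_{i}, 1] \circ A$. Moreover, the existence of the matrices $B, B'$ taking $(s_{1}, e s_{2})$ to $(eg, 0)$ forces $\gcd(\vect{t}) = \gcd(s_{1}, es_{2}) = eg = \gcd(\vect{u})$, so the intermediate accelerated circle $S^{1}(eg)$ is canonically $S^{1}(\gcd(\vect{t}))$.

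The last step is to interpret each matrix map on the level of $\TF$, using Lemma~\ref{l:TF}, which is where the profinite completion hypothesis enters. The homeomorphisms $A$ and $C$ provide the identifications $\TF(\THH(k) \sma X_{I - \{i\}}; \vect{s}) \simeq \Sigma[\THH(k) \sma S^{\lambda}]^{C_{\gcd(\vect{s})}} \sma \bT^{n-1}_{+}$ and similarly for $\vect{u}$. The $e_{i}$-th power map $[e_{i}, 1]\colon S^{1}(g) \to S^{1}(eg)$ on an accelerated circle corresponds to the subgroup inclusion $C_{g} \hookrightarrow C_{eg}$, and therefore induces the Verschiebung transfer $V_{e_{i}}$ on $C_{g}$-fixed points, exactly as in the $1$-variable analysis of Hesselholt--Madsen. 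The isomorphism $B'B^{-1}$ induces an $S^{1}$-equivariant self-equivalence of the corresponding wedge summand. Finally, $[1, d_{i}]$ acts as the $d_{i}$-fold cover on the $S^{1}(0)$ factor of the torus, producing $1 \sma d_{i}$.

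The main obstacle is not the idea, which follows essentially mechanically from Lemma~\ref{lem:Euclid}, but rather the careful bookkeeping: one must verify that, under the identifications of Remark~\ref{rem:HighDimTorus}, the Verschiebung and the torus multiplication really do act on disjoint coordinates, and that the representation sphere $S^{\lambda}$ is transported unchanged throughout. This decoupling is ensured by the matrix form of Lemma~\ref{lem:Euclid}, in which $[e_{i}, 1]$ and $[1, d_{i}]$ touch only the $S^{1}(g)$ and $S^{1}(0)$ factors respectively, with the sphere $S^{\lambda}$ and the remaining $\bT^{n-2}$ passively tracked along.
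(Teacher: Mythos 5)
Your argument is correct and follows the same route as the paper's (very terse) proof: reduce to the two-variable case via Remark~\ref{rem:HighDimTorus}, apply the factorization of Lemma~\ref{lem:Euclid} with $s_{1}=\gcd_{j\neq i}(s_{j})$, $s_{2}=s_{i}$, $a=a_{i}$, and pass to $\TF$ via Lemma~\ref{l:TF}. Your additional checks --- that $\lambda_{I-\{i\}}(\vect{s})=\lambda_{I}(\vect{u})$, that $\gcd(\vect{t})=e_{i}\gcd(\vect{s})=\gcd(\vect{u})$, and that the $e_{i}$-th power map on the accelerated circle induces the transfer $V_{e_{i}}$ --- are exactly the bookkeeping the paper leaves implicit, and they are all accurate.
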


\begin{proof}
This follows from Remark \ref{rem:HighDimTorus} by smashing with $THH(k) \sma S^\lambda$ and then applying Lemma \ref{l:TF}.
\end{proof}

\begin{proof}[Proof of Theorem \ref{t:main3}]
Suppose $p$ does not divide any of the $a_i$. Then both the Verschiebung $V_{e_i}$ and a map of degree $d_i$ on the last torus factor as in Lemma \ref{l:aasjsdfkl} above induce isomorphisms. Hence $\widetilde{\TC}(A)$ splits as a wedge over those $\vect{s}$ for which $a_i \nmid s_i$ for all $i$. The result follows. The calculation of the order follows from that in Theorem \ref{t:main2}.
\end{proof}

\section{Calculations for $\bZ$} \label{s:integers}
In the single variable case, the algebraic $K$-theory groups $K_q(\mathbb{Z}[x]/x^n, (x))$ have been studied by the first two authors and Hesselholt \cite{AnGeHe}. They compute these groups completely for $q$ odd and up to extensions for $q$ even. We now consider the $n$-variable case, proving Theorem \ref{mainZ}, stated in the introduction. To prove the theorem we compute the Poincar\'e series of $\TC(T \sma X_I)$ at each vertex of our $n$-cube $\TC(T \sma \cX)$. 

\begin{prop} \label{PoincareTCZ}
For $k=\bZ$ the Poincar\'e series of $\TC(T \sma \widehat{X}_I)$ is given by
\[ \sum_{j \geq 1} t^{2j-1} \binom{n+j-2}{n-1} \prod_{i \in I} a_i. \]
\end{prop}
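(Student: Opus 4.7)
My plan is to extend the analysis of Theorem~\ref{t:TFcalc} to $k=\bZ$, computing only ranks rather than the full algebraic structure. Since the Poincar\'e series is a rank invariant, I may rationalize throughout. The starting point is the wedge decomposition
\[
\TC(T \sma \widehat{X}_I) \simeq \bigvee_{\gcd(\vect{s}) = 1} \holim_R \Sigma[T \sma S^{\lambda_I(d\vect{s})}]^{C_d}
\]
from Lemma~\ref{l:TF}, valid after profinite completion; the rational part is handled separately via the Hesselholt--Madsen fundamental fiber sequence.

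The rational calculation at each vertex uses the following input: for $k=\bZ$, $\THH(\bZ)\otimes\bQ\simeq H\bQ$ as cyclotomic spectra (since positive-degree homotopy of $\THH(\bZ)$ is torsion), and by the decomposition of rational $C_n$-spectra into geometric-fixed-point pieces indexed by subgroups $C_e\subset C_n$, the rational homotopy $\pi_*[(H\bQ\sma S^{\lambda})^{C_n}]$ is a direct sum $\bigoplus_{e\mid n}\bQ$ with the $e$-th summand placed in degree $2\dim_{\bC}\lambda^{C_e}$. Applied to $\lambda=\lambda_I(\vect{s})$, this expresses $\pi_*\TF(T \sma \widehat{X}_I;\vect{s})\otimes\bQ$ as a direct sum over $e\mid \gcd(\vect{s})$ of copies of $\bQ$ in specific odd degrees, and the $\holim_R$ together with the Frobenius equalizer defining $\TC$ picks out exactly the contribution from the ``newest'' orbit representative, as in the $n=1$ analysis of \cite{AnGeHe}.

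Collecting terms, the rank of $\TC_{2q-1}(T \sma \widehat{X}_I)\otimes\bQ$ equals $|S_q(I)\setminus S_{q-1}(I)|$, the number of $\vect{s}\in\bN^n$ with $q(\vect{s})=q$ where $q(\vect{s})=1+\sum_i\lfloor(s_i-1)/a_i^{\chi_I(i)}\rfloor$. Substituting $q_i' = \lfloor(s_i-1)/a_i^{\chi_I(i)}\rfloor\geq 0$: each value of $q_i'$ is attained by $a_i^{\chi_I(i)}$ values of $s_i$ (which is $1$ if $i\notin I$ and $a_i$ if $i\in I$), and the number of non-negative tuples with $\sum q_i' = q-1$ is $\binom{n+q-2}{n-1}$. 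This gives the Poincar\'e series coefficient $\binom{n+q-2}{n-1}\prod_{i\in I}a_i$.

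The main obstacle is the collapse from the rank contribution at each $\TF$ summand (one $\bQ$ per divisor of $\gcd(\vect{s})$) to rank one in $\TC$: the $\holim_R$ together with the Frobenius equalizer must collapse the contributions from $e\mid\gcd(\vect{s})$, leaving only the top geometric-fixed-point class. This is the rational analog of the single-variable analysis in \cite{AnGeHe}, and requires careful tracking of the action of restriction and Frobenius on the rational equivariant homotopy.
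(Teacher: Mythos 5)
Your overall route is the paper's: start from the wedge decomposition of Lemma~\ref{l:TF}, locate one rank-one class in degree $2\dim_\bC\big(\lambda_I(\vect{s})^{C_d}\big)+1$ for each pair $(\vect{s},\,d\mid\gcd(\vect{s}))$, argue that passing to $\TC$ leaves exactly one class for each $\vect{s}\in\bN^n$, and then count lattice points to get $\binom{n+j-2}{n-1}\prod_{i\in I}a_i$; your counting step and the identification of the rank of $\TC_{2q-1}$ with $|S_q(I)\setminus S_{q-1}(I)|$ agree with the paper. Where you genuinely differ is at the vertices: the paper simply quotes the $RO(S^1)$-graded TR-groups of $\bZ$ from \cite{AnGeHe}, while you rederive the rational statement from the splitting of rational genuine $C_m$-spectra into geometric-fixed-point pieces together with $\THH(\bZ)_\bQ\simeq H\bQ$. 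That input is correct, and correct for the reason you indicate: the cyclotomic structure gives $(\THH(\bZ)\sma S^{\lambda})^{gC_e}\simeq\THH(\bZ)\sma S^{\lambda^{C_e}}$, which is rationally an $H\bQ$-sphere in degree $2\dim_\bC\lambda^{C_e}$, and the Weyl group action on its homotopy is trivial because it extends over the connected group $S^1/C_e$. So your vertex computation is a legitimate, somewhat more self-contained substitute for the citation.

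The step you label ``the main obstacle,'' however, is precisely the step the paper's proof carries out, and as written you have not closed it; two corrections are needed. First, in the formula you quote there is no remaining Frobenius equalizer: the Frobenius limit was already absorbed into Lemma~\ref{l:TF} (via profinite completion and \cite[Lemma 8.2]{HeMa97}), and the displayed wedge of $\holim_R$'s \emph{is} $\TC$, so your plan to equalize against $F$ afterwards double-counts the structure maps. Second, the collapse to one class per $\vect{s}$ does not need an appeal to the $n=1$ analysis; it follows from the explicit description of $R$ on the relevant classes, namely that $R_m$ carries the summand of $m\vect{s}$ to that of $\vect{s}$ as (the identity on $\pi_0\THH(\bZ)$) smashed with the homeomorphism $r_m\colon (S^{\lambda_I(m\vect{s})})^{C_{md}}\to (S^{\lambda_I(\vect{s})})^{C_d}$. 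Hence within each ray the tower of free classes lying over a fixed $\vect{s}$ is constant, and the limit retains exactly one $\bZ$ in degree $2\dim_\bC\lambda_I(\vect{s})+1$ for every $\vect{s}\in\bN^n$. Relatedly, ``rationalize throughout'' is only safe after this observation, since rationalization need not commute with $\holim_R$; the paper avoids the issue by tracking integral $\bZ$-summands along towers whose maps are isomorphisms (so there are no $\lim^1$ surprises) and reading off ranks at the end. With these points supplied, your argument coincides with the paper's.
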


\begin{proof}

We use the formula from Lemma \ref{l:TF}:  up to profinite completion,
\[
\TF(T \sma \widehat{X}_I) = \bigvee_{\vect{s}\in\bN^n} \Sigma (T \sma S^{\lambda_I(\vect{s})})^{C_{\gcd(\vect{s})}}.
\]
Using the computations of $RO(S^1)$-graded TR-groups of $\mathbb{Z}$ found in \cite{AnGeHe}, we know that
\[
\Sigma(\THH(\bZ) \sma S^{\lambda_I(\vect{s})})^{C_{\gcd(\vect{s})}},
\]
contributes a $\bZ$ in degree $\big(2\dim_\bC(S^{\lambda_I(\vect{s})})^{C_d}+1\big)$ for each $d \mid \gcd(\vect{s})$.
Each such $\bZ$ comes from $\pi_0(\THH(\bZ) )\cong\bZ$ and the sphere
$[S^{\lambda_I(\vect{s})}]^{C_{d}}$.  Note that the restriction maps of the cyclotomic spectrum $\THH(\bZ) $ induce the identity on $\pi_0$, and that the restriction maps $r_m: (S^{\lambda_I(m\vect{s})})^{C_m} \to S^{\lambda_I(\vect{s})}$ exactly send $(S^{\lambda_I(m\vect{s})})^{C_{md}} $ homeomorphically onto $(S^{\lambda_I(\vect{s})})^{C_{d}}$.  So in $ \TC_*(T \sma \widehat{X}_I) $, we will get a $\bZ$ in degree $\big(2\dim_\bC(S^{\lambda_I(\vect{s})})+1\big)$ for every $\vect{s}\in\bN^n$.

Therefore, to compute the free rank of $ \TC_{2j-1}(T \sma \widehat{X}_I) $, we need to count the number of vectors $\vect{s}$ such that $2\dim_\bC(S^{\lambda_I(\vect{s})})+1 = 2j-1.$ Hence we want to count the number of $\vect{s}=(s_1,\ldots, s_n)$ such that $\dim_\bC(S^{\lambda_I(\vect{s})}) = j-1.$  There are $\binom{n+j-2}{n-1}$ vectors $(b_1,\ldots,b_n)$ with $b_i\geq 0$ for all $1\leq i\leq n$ and $b_1+\ldots+b_n=j-1$, and there are
\[
\begin{cases} 1 & \textnormal{if $i \not \in I$} \\ a_i & \textnormal{if $i \in I$} \end{cases}
\]
positive integers $s_i$  with
\[
\dim_\bC \lambda\left(\left\lfloor \frac{s_i-1}{a_i^{\chi_{I}(i)}} \right\rfloor\right) = b_i.
\]
The result follows.
\end{proof}

Theorem \ref{PoincareKZ} follows directly from Proposition \ref{PoincareTCZ} and the computation of Verschiebung maps in \cite{AnGeHe}.

\bibliographystyle{plain}
\bibliography{b}

\end{document}